   \newcommand{\Hom}{\operatorname{Hom}}
\newcommand{\Ad}{\operatorname{Ad}}
\newcommand{\id}{\operatorname{id}}
\newcommand{\Aut}{\operatorname{Aut}}
\newcommand{\eq}[1]{\begin{equation}#1\end{equation}}
 \newcommand{\Ext}{\operatorname{Ext}}
\newcommand{\diag}{\operatorname{diag}}
 \newcommand{\ev}{\operatorname{ev}}
   \theoremstyle{plain}
   \newtheorem{thm}{Theorem}[section]
   \newtheorem{prop}[thm]{Proposition}
   \newtheorem{lemma}[thm]{Lemma}
   \newtheorem{cor}[thm]{Corollary}
   \theoremstyle{definition}
   \newtheorem{defn}[thm]{Definition}
   \newtheorem{example}[thm]{Example}
   \theoremstyle{remark}
   \newtheorem{remark}[thm]{Remark}
   \numberwithin{equation}{section}
        \date{\today}
\title[Extensions of $C^*$-algebras]{Semi-invertible extensions of $C^*$-algebras}
\author{Vladimir Manuilov and Klaus Thomsen}
\date{\today}
\email{matkt@imf.au.dk}
  \address{Dept. of Mech. and Math.\\
Moscow State University\\
Moscow, 119899, Russia}
\address{Institut for matematiske fag, Ny Munkegade, 8000 Aarhus C,
  Denmark}
\begin{document}

\maketitle

\section{Introduction and statements of results}

The number of examples of $C^*$-algebras for which the semi-group
of extensions by the compact operators is not a group was only
slowly increasing during the first decades following the first
example of J. Anderson,\cite{A}, but recently the pace has picked
up, cf. \cite{HT}, \cite{HS}, \cite{HLSW} and \cite{Se}, and there
are now whole series of $C^*$-algebras $A$ for which it is known
that there are non-invertible extensions of $A$ by the
$C^*$-algebra of compact operators $\mathbb K$. Furthermore, by
considering extensions by general stable $C^*$-algebras the stock
of examples of non-invertible extensions grows considerably.
Indeed, a non-invertible extension of a $C^*$-algebra $A$ by
$\mathbb K$ gives rise to a non-invertible extension of $A$ by $B
\otimes \mathbb K$ for any unital $C^*$-algebra
$B$.\footnote{Tensor the non-invertible extension with $B$ using
the maximal tensor-product, and pull back along the unital
inclusion $A \subseteq A \otimes_{max} B$. It is easy to see that
the resulting extension of $A$ by $B \otimes \mathbb K$ does not
have a completely positive section for the quotient map because
the original extension does not.}

In a different direction the authors have shown that many of the non-invertible
extensions are invertible in a slightly weaker sense, called
\emph{semi-invertibility}. Recall that an extension of a
$C^*$-algebra $A$ by a stable $C^*$-algebra $B$ is invertible when
there is another extension, the inverse, with the property that the
direct sum extension of the two is a split
extension. Semi-invertibility requires only that the sum is
\emph{asymptotically split}, in the sense that there is an asymptotic
homomorphism as defined by Connes and Higson, \cite{CH}, consisting
of right-inverses of the quotient map. It turns out that extensions of
a suspended or a contractible $C^*$-algebra are always
semi-invertible, \cite{MT3}, \cite{MT1}, and in \cite{ST} it was shown that
the
extensions of the reduced group $C^*$-algebra of a free product of
amenable groups are all semi-invertible. The main purpose of the
present paper is to prolonge this list of $C^*$-algebras for which all the
extensions by a separable stable $C^*$-algebra are
semi-invertible.

To explain why semi-invertibility is a natural notion which can be
considered as the best alternative when invertibility fails, we
recall first the central definitions. Let $A$ and $B$ be separable
$C^*$-algebras. The multiplier algebra of $B$ will be denoted by
$M(B)$, the generalized Calkin algebra of $B$ by $Q(B)$ and $q_B :
M(B) \to Q(B)$ is then the canonical surjection. We let
$\Ext(A,B)$ denote the semi-group of unitary equivalence classes
of extensions of $A$ by $B$. Thus elements of $\Ext(A,B)$ are
represented by $*$-homomorphisms $\varphi : A \to Q(B)$ and two
extensions $\varphi, \psi : A \to Q(B)$ are unitarily equivalent
when there is a unitary $u \in M(B)$ such that $\Ad q_B(u) \circ
\varphi = \psi$. The addition $\varphi \oplus \psi$ of two
extensions is defined from a choice of isometries $V_1,V_2 \in
M(B)$ such that $V_1V_1^* + V_2V_2^* = 1$ to be the extension
$$
\left(\varphi \oplus \psi\right)(a)=  q_B\left(V_1\right) \varphi(a)q_B\left(V_1\right)^* +
  q_B\left(V_2\right) \psi(a)q_B\left(V_2\right)^* .
$$
An extension $\varphi : A \to Q(B)$ is \emph{split} when there is
a $*$-homomorphism $\pi : A \to M(B)$ such that $\varphi = q_B
\circ \pi$ and \emph{asymptotically split} when there is an
asymptotic homomorphism $\pi_t : A \to M(B), t \in [1,\infty)$,
such that $q_B \circ \pi_t = \varphi$ for all $t$. We say that
\text{$\Ext(A,B)$ is a group} when every extension $\varphi : A
\to Q(B)$ has an inverse, meaning that there is another extension
$\varphi' : A \to Q(B)$, \emph{the inverse of $\varphi$}, such
that $\varphi \oplus \varphi'$ is split. An extension $\varphi : A
\to Q(B)$ is \emph{semi-invertible} when there is another
extension $\varphi' : A \to Q(B)$ such that $\varphi \oplus
\varphi'$ is asymptotically split.

When the theory of $C^*$-extensions was first
introduced, in the work of Brown, Douglas and Fillmore, \cite{BDF1},
\cite{BDF2}, the authors had very good (operator theoretic) reasons
for wanting to trivialize the split extensions. \footnote{They also
  had good reasons for restricting the attention to essential
  extensions, but that's another story.} However, there are
other reasons why split extensions must be trivialized in order to
get a group from the semi-group $\Ext(A,B)$. For a split extension
$\psi$ it makes sense to define the direct sum $\psi^{\infty}$ of a
countably infinite collection of copies of $\psi$. Since $\psi \oplus
\psi^{\infty} \oplus 0 = \psi^{\infty} \oplus 0$ in $\Ext(A,B)$ this
shows that split extensions are trivial in any group-quotient of
$\Ext(A,B)$. It is not difficult to show that $\psi^{\infty}$ can
also be defined when the extension $\psi$ is asymptotically split. In
fact, this is possible as soon as the extension splits via a
discrete asymptotic homomorphism, e.g when it is quasi-diagonal.
But by using the real parameter for the asymptotic section it can
also be arranged that $\psi \oplus \psi^{\infty} \oplus 0$ becomes
unitarily equivalent to $\psi^{\infty} \oplus 0$. It follows that
also asymptotically split extensions must vanish in a
group-quotient of $\Ext(A,B)$. In fact, any group-quotient of
$\Ext(A,B)$ must factor through the cancellation semi-group of
$\Ext(A,B)$. In retrospect it seems therefore not particularly
surprising that it is not generally enough to trivialize only the
split extensions to get a group, or even the asymptotically split
extensions, as demonstrated in \cite{MT4}. In fact, seen through
the right looking-glasses it seems more surprising that
$\Ext(A,B)$ actually \emph{is} a group in so many cases, and that
semi-invertibility prevails in many cases where invertibility
fails.

Complementing on the cases
covered by the results in \cite{MT3}, \cite{MT1}, \cite{M},
\cite{Th4} and \cite{ST} we shall show in this paper that all
extensions in $\Ext(A,B)$ are semi-invertible when
\begin{enumerate}
\item[a)] $A$ is the reduced group $C^*$-algebra $C_r^*(G)$ and the
  group $G$ is an amalgamated free product $G = G_1
  *_F G_2$ with $F$ finite, $G_2$ is amenable and $G_1$ abelian, and when
\item[b)] $A$ is the amalgamated free product of $C^*$-algebras, $A  = A_1*_D A_2$, when
  $D$ is nuclear and all extensions of $A_i$ by $ B$ are semi-invertible,
  $i =1,2$.
\end{enumerate}
The result concerning a) is actually slightly more general and
involves a KK-theory condition which is automatically fullfilled when
$G_1$ is abelian. Furthermore we establish a few permanence properties for
semi-invertibility: If all extensions of $A$ and $A'$ by $B$ are
semi-invertible then so are all extensions of $A\oplus A'$ by $B$, all
extensions of $C(\mathbb T)\otimes A$ by $B$ and all extensions of
$\mathbb K \otimes A$ by $B$. It follows from this that all extensions
of $A$ by $B$ are semi-invertible when
\begin{enumerate}
 \item[a')] $A = C_r^*(G')$ provided $G' = \mathbb Z^k \times H \times
   G$ where $H$ is a finite group and $G$ is an amalgamated free product
   as in a) above, and when
\item[b')] $A$ is the full group $C^*$-algebra $C^*(\mathbb Z^k \times H \times G'')$ where $H$ is a
  finite group and $G''$ is obtained through successive amalgamations
$$
G'' = \left( \cdots \left(\left(G_1 *_{H_1} G_2\right) *_{H_2} G_3\right) *_{H_3}
\dots \dots \right) *_{H_{n-1}} G_n,
$$
provided all the groups $H_1,H_2, \dots,
H_{n-1}$ are amenable, and all extensions of $C^*(G_i)$ by $B$ are
semi-invertible, $i = 1,2, \dots,n$.
\end{enumerate}
While we know from \cite{HS}, \cite{HLSW} and \cite{Se} that there are
non-invertible extensions of $A$ by $B$ in many of the cases dealt
with in a), our ignorance concerning invertibility of the extensions
handled by b') is complete: There is no known example of an extension of a full group $C^*$-algebra by a
stable $C^*$-algebra which is not invertible.

The proof of a) above is an elaboration of the ideas developed in
\cite{M}, \cite{Th4} and \cite{ST}. In particular, the argument
uses the notion of strong homotopy of extensions and depends on
Lemma 4.3 in \cite{MT1}. In contrast the method of proof of b) is
new and does not use strong homotopy of extensions. Instead a key
step uses methods devised for the classification of $C^*$-algebras
by Lin, Dadarlat and Eilers. This difference in the proofs has
consequences for the conclusions we obtain; in case a) the inverse
(for semi-invertibility) can be chosen to be invertible while we
do not know if this is so in case b).

\emph{Acknowledgement.} The main part of this work was done during a stay of
          both authors at the Mathematische Forchungsinstitut in
          Oberwolfach in January 2010 in the framework of the `Research in Pairs' programme. We want to thank the MFO for the perfect
          working conditions.

\section{The reduced  group $C^*$-algebra of free products with
  amalgamation over a finite subgroup}

Throughout $A$ and $B$ are separable $C^*$-algebras and $B$ is
stable. Two extensions $\varphi,\varphi': A \to Q(B)$ are \emph{strongly
  homotopic} when there is a path $\psi_t, t \in [0,1]$, of extensions
$\psi_t : A \to Q(B)$ such that
\begin{enumerate}
\item[1)] $t \mapsto \psi_t(a)$ is continuous for all $a \in A$, and
\item[2)] $\psi_0 = \varphi$ and $\psi_1 = \varphi'$.
\end{enumerate}

By Lemma 4.3 of \cite{MT1} we have the following

\begin{thm}\label{basic} Assume that two extensions $\varphi,
  \varphi' : A \to Q(B)$ are strongly homotopic. Then $\varphi$ is asymptotically
  split if and only if $\varphi'$ is asymptotically split.
\end{thm}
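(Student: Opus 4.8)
By the symmetry of the hypothesis it suffices to prove one implication, say that asymptotic splitness of $\varphi = \psi_0$ forces asymptotic splitness of $\varphi' = \psi_1$. So I fix an asymptotic homomorphism $\pi_t : A \to M(B)$, $t \in [1,\infty)$, with $q_B \circ \pi_t = \varphi$ for all $t$, and I assemble the strong homotopy into a single map $\Psi : A \to C([0,1],Q(B))$, $\Psi(a)(s) = \psi_s(a)$. Condition 1) guarantees that $\Psi(a)$ is a continuous $Q(B)$-valued function, and since each $\psi_s$ is a $*$-homomorphism, $\Psi$ is itself a $*$-homomorphism with $\mathrm{ev}_s \circ \Psi = \psi_s$; in particular $\mathrm{ev}_0\circ\Psi = \varphi$ and $\mathrm{ev}_1\circ\Psi = \varphi'$.

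The first step is to replace the single lift $\pi_t$ by a lift of the entire path. Here $C([0,1],B)$ is an ideal in $C([0,1],M(B))$ whose quotient is $C([0,1],Q(B))$, the quotient map being $q_B$ applied pointwise. Since $A$ is separable, Arveson's lifting theorem produces a completely positive contractive lift $\mathbf{S} : A \to C([0,1],M(B))$ of $\Psi$. Writing $S_s = \mathrm{ev}_s\circ \mathbf{S}$ I obtain a field of completely positive contractive maps $S_s : A \to M(B)$, continuous in $s$ for the point-norm topology, with $q_B\circ S_s = \psi_s$; in particular $S_0$ and $S_1$ are completely positive lifts of $\varphi$ and $\varphi'$. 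Because $S_0$ and $\pi_t$ both lift $\varphi$, one has $S_0(a) - \pi_t(a) \in \ker q_B = B$ for all $a$ and $t$, so $\pi_t$ is available as exact boundary data for the field $\{S_s\}$ at $s=0$.

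The decisive step is to manufacture from $\pi_t$ and the field $\{S_s\}$ an asymptotic homomorphism lifting $\psi_1$. The naive attempt is to correct the value of $\pi_t$ from $\varphi$ up to $\psi_{r(t)}$ for a reparametrization $r:[1,\infty)\to[0,1)$ with $r(1)=0$ and $r(t)\to 1$, i.e.\ to set
\[
\pi'_t(a) = \pi_{w(t)}(a) + \bigl(S_{r(t)}(a) - S_0(a)\bigr),\qquad w(t)\to\infty .
\]
This does satisfy $q_B\circ\pi'_t = \psi_{r(t)}\to\psi_1$, and $\pi'_t(a)-S_{r(t)}(a)\in B$, so it lifts $\psi_1$ in the limit. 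However one checks immediately that it is not asymptotically multiplicative: the correction $S_{r(t)}-S_0$ comes from a merely completely positive field whose multiplicative defect does not vanish, and the cross terms with $\pi_{w(t)}$ survive. The asymptotic multiplicativity must therefore be transported through the homotopy from $s=0$, where $\pi_t$ supplies it, rather than added on at the end. This is the main obstacle. Concretely, one has to introduce the $s$-variation incrementally and move in $s$ slowly enough, as $t\to\infty$, that the multiplicative and additive defects accumulated from the increments $S_{s+\Delta s}-S_s$ of the non-multiplicative field are dominated, uniformly on a countable dense subset of $A$, by the vanishing defect of $\pi_{w(t)}$. Producing the two reparametrizations $r$ and $w$ that make this quantitative homotopy-lifting estimate work — while keeping the resulting family continuous in $t$ — is the real content of the proof; once it is in place, evaluating in the limit $t\to\infty$ yields the desired asymptotic homomorphism lifting $\psi_1 = \varphi'$.
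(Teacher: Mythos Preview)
Your outline points in the right direction---this is essentially the strategy behind Lemma~4.3 of \cite{MT1}, which is all the paper invokes here---but as written it has one error and one genuine gap.

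The error is the appeal to ``Arveson's lifting theorem'' to produce a completely positive contractive section $\mathbf{S}:A\to C([0,1],M(B))$ of $\Psi$. The Choi--Effros lifting theorem requires $A$ to be nuclear (or the map to be nuclear), and the whole point of the present paper is to treat non-nuclear $A$ such as $C^*_r(G_1*_H G_2)$; in that generality a completely positive lift simply need not exist. Fortunately the argument does not need one: a Bartle--Graves continuous section of the quotient $C([0,1],M(B))\to C([0,1],Q(B))$ suffices, since the only properties you actually use are that $q_B\circ S_s=\psi_s$ and that $s\mapsto S_s(a)$ is norm-continuous. The proof in \cite{MT1} works with exactly such a non-linear continuous lift.

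The gap is more serious: you correctly identify that the naive correction $\pi_{w(t)} + (S_{r(t)}-S_0)$ is not asymptotically multiplicative, and you state that one must choose reparametrizations $r$ and $w$ so that the defects accrued along the homotopy are dominated by the vanishing defect of $\pi_{w(t)}$---but you do not carry this out. That inductive/diagonal construction \emph{is} the proof; without it you have only described what needs to be shown. One has to fix an increasing sequence of finite sets with dense union in $A$, break $[0,1]$ into small steps, and at each step advance $t$ far enough that the multiplicative defect of the asymptotic lift absorbs the perturbation coming from $S_{s+\Delta s}-S_s$ on the current finite set, with errors that are summable. The details (which occupy a page or two in \cite{MT1}) are not difficult once the scheme is set up, but they are not optional.
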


 In some of the cases we deal with below we show that for
any extension $\varphi : A \to Q(B)$ there is an extension $\psi : A \to Q(B)$ such that $\varphi \oplus \psi$ is strongly
homotopic to a split extension. This will be expressed by saying that $\varphi$ is \emph{strongly
  homotopy invertible}. Thanks to Theorem \ref{basic} this implies that $\varphi$ is semi-invertible. In some cases it turns
out that $\psi$ can be taken to be
invertible. We express this by saying that $\varphi$ \emph{is strongly homotopy invertible with an
invertible inverse.}

\begin{lemma}\label{correction} Let $G_i, i = 1,2$, be discrete
  countable amenable groups with a common finite subgroup $H \subseteq G_i, i
  = 1,2$. Let $G_1 *_H G_2$ be the amalgamated free product group. Let $\mu : C^*(G_1 *_H G_2) \to
  C^*_r(G_1 *_H G_2)$ be the canonical surjection and let $h_\tau :
  C^*(G_1*_H G_2) \to
  \mathbb C$ be the character corresponding to the trivial
  one-dimensional representation of $G_1*_H G_2$. There are then a separable infinite-dimensional
  Hilbert space $\mathbb H$, $*$-homomorphisms $\sigma,\sigma_0 :
  C^*_r(G_1 *_H G_2) \to B(\mathbb H)$, and a
  path
$$
\zeta_s : C^*(G_1 *_H G_2) \to B(\mathbb H), \ s \in [0,1],
$$
of unital
  $*$-homomorphisms such that
\begin{enumerate}
\item[a)] $\zeta_0 = \sigma \circ \mu$;
\item[b)] $\zeta_1 = h_\tau \oplus\sigma_0\circ \mu$;
\item[c)] $\zeta_s(a) - \zeta_0(a) \in \mathbb K, \ s \in [0,1]$, and
\item[d)] $s \mapsto \zeta_s(a)$ is continuous for all $a \in
  C^*(G_1*_H G_2)$.
\end{enumerate}
\begin{proof} Set $G = G_1 *_H G_2$. Being amenable $ G_i $ has the
Haagerup Property. See the discussion in 1.2.6 of \cite{CCJJV}. It
follows then from Propositions 6.1.1 and 6.2.3 of \cite{CCJJV} that
also $ G $ has the Haagerup Property. Since the Haagerup Property
implies $K$-amenability by \cite{Tu} (or Theorem 1.2 in \cite{HK}) we conclude that $G$ is
$K$-amenable. We can therefore find a separable infinite-dimensional
  Hilbert space $\mathbb H$ and $*$-homomorphisms $\sigma, \sigma_0 :
  C^*_r(G) \to B(\mathbb H)$ such that $\sigma$ and $h_{\tau} \oplus
  \sigma_0$ are both unital and
\begin{enumerate}
\item[1)] $\sigma \circ \mu (x) - \left(h_{\tau} \oplus \sigma_0 \circ
    \mu\right)(x) \in \mathbb K, \ x \in C^*(G)$, and
\item[2)] $\left[\sigma \circ \mu , h_{\tau} \oplus \sigma_0 \circ \mu
  \right] = 0$ in $KK\left(C^*(G), \mathbb K\right)$,
\end{enumerate}
cf. \cite{C}. By adding the same unital and injective $*$-homomorphism to $\sigma$
and $\sigma_0$ we can arrange that both $\sigma$ and $\sigma_0$ are
injective and have no non-zero compact operator in their range. Since
$\mu|_{C^*(G_i)} : C^*(G_i) \to C^*_r(G_i)$ is injective because $G_i$
is amenable, it
follows that $\sigma \circ \mu|_{C^*(G_i)}$ and $(h_\tau \oplus \sigma_0
\circ \mu)|_{C^*(G_i)}$ are admissible in the sense of Section 3 of
\cite{DE} for each $i$. Thus Theorem 3.12 of
\cite{DE} applies to show that there is a
norm-continuous path $u^i_s, s\in [1,\infty)$, of unitaries in $1 +
\mathbb K$ such that
\begin{equation}\label{Hfin1}
\lim_{s \to \infty}
\left\|\sigma\circ \mu|_{C^*(G_i)}(a)  -
  u^i_s \left(h_\tau \oplus \sigma_0\circ
   \mu\right)|_{C^*(G_i)}(a){u^i_s}^* \right\| = 0
\end{equation}
for all $a \in
C^*(G_i)$ and
\begin{equation}\label{Hfin2}
\sigma \circ \mu|_{C^*(G_i)}(a)  -
  u^i_s \left(h_\tau \oplus \sigma_0\circ
   \mu\right)|_{C^*(G_i)}(a){u^i_s}^* \in \mathbb K
\end{equation}
for all $a
  \in C^*\left(G_i\right)$ and all $s \in [1,\infty)$. Set
$$
F = \left(h_\tau \oplus \sigma_0\circ \mu\right) \left(C^*(H)\right)
$$
which is a finite dimensional unital $C^*$-subalgebra of $B(\mathbb
H)$, and let $P :
  B(\mathbb H) \to F'\cap B(\mathbb H)$ be the conditional expectation
  given by
$$
P(x) = \int_{U(F)} uxu^* \ d u,
$$
where we integrate with respect to the Haar-measure on the unitary
group $U(F)$ of $F$. Note that $P\left(1 + \mathbb K\right) \subseteq
1 + \mathbb K$. It follows from (\ref{Hfin1}) that ${u^2_s}^*u^1_s$
asymptotically commutes with elements of $F$ and hence also that
\begin{equation}\label{applied}
\lim_{s \to \infty} \left\|P\left({u^2_s}^*u^1_s\right) -
  {u^2_s}^*u^1_s\right\| = 0.
\end{equation}
Standard $C^*$-algebra techniques provides us then with a
norm-continuous path $v_t, t \in [1,\infty)$, of unitaries in $F'\cap
(1+ \mathbb K)$ such that $\lim_{s \to \infty} \left\|v_s -
P\left({u^2_s}^*u^1_s\right)\right\| = 0$, which combined with
(\ref{applied}) implies that
$$
\lim_{s \to \infty} \left\|u^2_sv_s - u^1_s\right\| = 0 .
$$
It follows that we can work with $u^2_sv_s$ in the place of $u^1_s$ to
arrange that besides (\ref{Hfin1}) and (\ref{Hfin2}) we have also that
\begin{equation*}\label{Hfin3}
\Ad u^1_s \circ \left(h_\tau \oplus \sigma_0\circ \mu\right)|_{C^*(H)} = \Ad
u^2_s \circ \left(h_\tau \oplus \sigma_0\circ \mu\right)|_{C^*(H)}
\end{equation*}
for all $s$. It follows that the
$*$-homomorphisms
$$
\psi'_s = \left(\Ad u^1_s \circ \left(h_\tau \oplus \sigma_0\circ \mu\right)\right)
*_{C^*(H)} \left(\Ad u^2_s \circ \left(h_\tau \oplus \sigma_0\circ
  \mu\right)\right)
$$
are all defined and give us a
norm-continuous path of unital
$*$-homomorphisms $\eta_s : C^*(G) \to B(\mathbb H), \ s \in
[0,1]$, such
that
\begin{enumerate}
\item[a')] $\eta_0 = \left(\Ad u^1_1 \circ \left(h_\tau \oplus\sigma_0\circ
  \mu\right)\right)
  *_{C^*(H)} \left(\Ad u^2_1 \circ \left(h_\tau \oplus\sigma_0\circ \mu\right)\right) $;
\item[b')] $\eta_1 = \sigma \circ \mu$;
\item[c')] $\eta_s(a) - \eta_0(a) \in \mathbb K, \ a \in C^*(G), s \in [0,1]$.
\end{enumerate}
The unitary group of $F' \cap \left(\mathbb C1 + \mathbb K\right)$ is
norm-connected; a fact which can be seen either from the spectral
theory of compact operators or by observing that the algebra is AF. By
using first a continuous path of unitaries connecting ${u^2_1}^*u^1_1$ to $1$ in
$F' \cap \left(1 + \mathbb K\right)$ and then a continuous path of
unitaries connecting $u^2_1$ to $1$ in the unitary group of $1 +
\mathbb K$, we obtain continuous paths $w^1_s$ and $w^2_s$, $s \in [0,1]$, of
unitaries in $1 + \mathbb K$ such that $w^1_0 = w^2_0 = 1$,  $w^1_1 =
u^1_1$, $w^2_1 = u^2_1$ and $\Ad
w^1_s \circ \left(h_\tau \oplus\sigma_0\circ \mu\right)|_{C^*(H)} = \Ad
w^2_s \circ \left(h_\tau \oplus\sigma_0\circ \mu\right)|_{C^*(H)}$ for all $s \in [0,1]$. It follows that the
$*$-homomorphisms
$$
\eta'_s = \left(\Ad w^1_s \circ \left(h_\tau \oplus \sigma_0\circ \mu\right)\right)
*_{C^*(H)} \left(\Ad w^2_s \circ \left(h_\tau \oplus \sigma_0\circ
  \mu\right)\right)
$$
are all defined and give us
a norm-continuous path of unital
$*$-homomorphisms $\eta'_s : C^*(G) \to B(\mathbb H), \ s \in
[0,1]$, such
that
\begin{enumerate}
\item[a'')] $\eta'_0 = h_\tau \oplus \left(\sigma_0\circ \mu\right)$;
\item[b'')] $\eta'_1 = \left(\Ad u^1_1 \circ \left(h_\tau \oplus\sigma_0\circ
  \mu\right)\right)
  *_{C^*(H)} \left(\Ad u^2_1 \circ \left(h_\tau \oplus\sigma_0\circ \mu\right)\right) $;
\item[c'')] $\eta'_s(a) - \eta'_0(a) \in \mathbb K, \ a \in C^*(G), s \in [0,1]$.
\end{enumerate}
The desired path $\zeta$ is then obtained by concatenation of the
paths, $\eta$ and $\eta'$.
\end{proof}
\end{lemma}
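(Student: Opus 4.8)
The plan is to build $\zeta$ by connecting $\sigma\circ\mu$ to $h_\tau\oplus\sigma_0\circ\mu$ through a concatenation of two paths of genuine representations of $C^*(G)$, each obtained as an amalgamated free product over $C^*(H)$ of representations of $C^*(G_1)$ and $C^*(G_2)$. The two representations $\sigma,\sigma_0$ are supplied by $K$-amenability of $G$: each $G_i$ is amenable, hence has the Haagerup property, which is inherited by $G=G_1*_H G_2$ by Propositions 6.1.1 and 6.2.3 of \cite{CCJJV}, and Haagerup implies $K$-amenability by \cite{Tu} (equivalently \cite{HK}). $K$-amenability yields $\sigma,\sigma_0$ with $\sigma\circ\mu$ and $h_\tau\oplus\sigma_0\circ\mu$ agreeing modulo $\mathbb K$ and $KK$-equivalent. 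After absorbing a unital injective representation into both, I may assume $\sigma,\sigma_0$ are injective with no nonzero compacts in their range; combined with injectivity of $\mu$ on each $C^*(G_i)$ (valid since $G_i$ is amenable), this makes the restrictions $\sigma\circ\mu|_{C^*(G_i)}$ and $(h_\tau\oplus\sigma_0\circ\mu)|_{C^*(G_i)}$ admissible in the sense of \cite{DE}.

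Applying Theorem 3.12 of \cite{DE} to each $i$ separately produces norm-continuous paths of unitaries $u^i_s\in 1+\mathbb K$, $s\in[1,\infty)$, implementing an asymptotic unitary equivalence between the two admissible restrictions, with compact difference at every $s$. The main obstacle is that $u^1_s$ and $u^2_s$ are manufactured independently and need not agree on $C^*(H)$, so the pointwise amalgamated free products of $\Ad u^1_s\circ(h_\tau\oplus\sigma_0\circ\mu)$ and $\Ad u^2_s\circ(h_\tau\oplus\sigma_0\circ\mu)$ are a priori undefined. To repair this I would exploit that $F=(h_\tau\oplus\sigma_0\circ\mu)(C^*(H))$ is a finite-dimensional subalgebra and average over its compact unitary group by the conditional expectation $P$ onto $F'\cap B(\mathbb H)$. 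The asymptotic equivalence forces ${u^2_s}^*u^1_s$ to asymptotically commute with $F$, hence to be asymptotically fixed by $P$; a standard perturbation then supplies a correcting path $v_s$ in $F'\cap(1+\mathbb K)$ with $u^2_s v_s$ asymptotically equal to $u^1_s$. Replacing $u^1_s$ by $u^2_s v_s$ secures exact agreement on $C^*(H)$ at every $s$ without disturbing the asymptotic equivalence, so the amalgamated free products are now genuinely defined. Reparametrising $[1,\infty)$ to $[0,1]$ and using that each factor converges in norm to $\sigma\circ\mu|_{C^*(G_i)}$, I obtain a continuous path $\eta_s$ of unital representations with $\eta_1=\sigma\circ\mu$, with $\eta_0$ the amalgam of the $\Ad u^1_1$- and $\Ad u^2_1$-conjugates of $h_\tau\oplus\sigma_0\circ\mu$, and with $\eta_s(a)-\eta_0(a)\in\mathbb K$.

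It remains to join $\eta_0$ to $h_\tau\oplus\sigma_0\circ\mu$. Here I would use that the unitary group of $F'\cap(\mathbb C 1+\mathbb K)$ is norm-connected (the algebra is AF): first connect ${u^2_1}^*u^1_1$ to $1$ inside $F'\cap(1+\mathbb K)$ and then $u^2_1$ to $1$ in $1+\mathbb K$, obtaining paths $w^1_s,w^2_s$ of unitaries in $1+\mathbb K$ starting at $1$, with $w^1_1=u^1_1$, $w^2_1=u^2_1$, and still agreeing on $C^*(H)$ for every $s$. The resulting amalgamated free products form a path $\eta'_s$ from $h_\tau\oplus\sigma_0\circ\mu$ to $\eta_0$, again compact modulo $\eta'_0$. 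Concatenating $\eta'$ and $\eta$ with the correct orientation produces the path $\zeta$, and properties a)--d) are then immediate from the endpoints, the compactness of each $\eta_s(a)-\eta_0(a)$ and $\eta'_s(a)-\eta'_0(a)$, and the compact agreement of $\sigma\circ\mu$ with $h_\tau\oplus\sigma_0\circ\mu$.
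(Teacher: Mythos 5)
Your proposal is correct and follows essentially the same route as the paper's own proof: $K$-amenability via the Haagerup property, Dadarlat--Eilers Theorem 3.12 applied to the admissible restrictions to each $C^*(G_i)$, the conditional-expectation correction onto $F'\cap B(\mathbb H)$ to force exact agreement of the implementing unitaries on $C^*(H)$, and then two paths of amalgamated free product representations joined by connectedness of the unitary group of $F'\cap(\mathbb C 1+\mathbb K)$. All the key steps, including the repair of the independently constructed $u^1_s,u^2_s$ and the final concatenation, coincide with the published argument.
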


\begin{thm}\label{thm1} Let $G_i, i = 1,2$, be discrete
  countable amenable groups with a common finite subgroup $H \subseteq G_i, i
  = 1,2$, and let $B$ be a separable stable $C^*$-algebra.  Let $G_1 *_H G_2$
  be the amalgamated free product group. Assume that the map
$$
i_1^* - i_2^* : KK\left(C^*(G_1),B\right) \oplus
KK\left(C^*(G_2),B\right) \to KK\left(C^*(H),B\right),
$$
induced by the inclusions $i_j : C^*(H) \to C^*(G_j), j = 1,2$, is rationally surjective, i.e. for every $x \in
KK\left(C^*(H),B\right)$ there is an $n \in \mathbb N \backslash
\{0\}$ such that $nx$ is in the range of $i_1^* - i_2^*$.

It follows that every extension of $C^*_r\left(G_1 *_H
  G_2\right)$ by $B$ is strongly homotopy invertible with an
invertible inverse.
\end{thm}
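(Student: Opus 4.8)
The plan is to construct, for a given extension $\varphi : C^*_r(G) \to Q(B)$ with $G = G_1 *_H G_2$, an \emph{invertible} extension $\psi$ of $C^*_r(G)$ together with a strong homotopy from $\varphi \oplus \psi$ to a split extension; by definition this is exactly strong homotopy invertibility with an invertible inverse, and through Theorem \ref{basic} it yields semi-invertibility. The entire construction exploits that the \emph{full} group $C^*$-algebra is a genuine amalgamated free product, $C^*(G) = C^*(G_1) *_{C^*(H)} C^*(G_2)$, so that a $*$-homomorphism — or a continuous path of such — out of $C^*(G)$ is the same datum as a compatible pair out of $C^*(G_1)$ and $C^*(G_2)$ agreeing on the finite-dimensional $C^*(H)$. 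I would therefore pull everything back along the canonical surjection $\mu$: set $\bar\varphi = \varphi \circ \mu$ and restrict it to the two vertex algebras to obtain extensions $\bar\varphi_i = \bar\varphi|_{C^*(G_i)}$, which agree on $C^*(H)$.

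First I would dispose of the two vertices. Since $G_i$ is amenable, $C^*(G_i) = C^*_r(G_i)$ is nuclear, so $\Ext(C^*(G_i), B)$ is a group and I may choose invertible inverses $\psi_i$ with $[\psi_i] = -[\bar\varphi_i]$ in $\Ext(C^*(G_i),B)$. Moreover, because $C^*(G_i)$ is nuclear, the Dadarlat--Eilers uniqueness theorem (Theorem 3.12 of \cite{DE}, already used in Lemma \ref{correction}) applies to admissible representatives and shows that $\bar\varphi_i \oplus \psi_i$ is asymptotically unitarily equivalent to, and hence strongly homotopic to, a split extension on each vertex separately.

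The crux is to make these two vertex constructions compatible over $C^*(H)$ and then to land inside extensions that factor through $\mu$. The restrictions $\psi_i|_{C^*(H)}$ carry the same class in the group $\Ext(C^*(H), B)$, so after adding an absorbing split piece I would use the conditional--expectation correction onto $F' \cap B(\mathbb H)$ underlying Lemma \ref{correction} to force the conjugating unitaries to agree literally on $C^*(H)$; the vertex homotopies then amalgamate to a single strong homotopy of extensions of $C^*(G)$. To recognise the resulting amalgamated split extension as a split extension of $C^*_r(G)$ I would invoke Lemma \ref{correction} itself, whose path $\zeta_s$ provides a strong homotopy, through compact perturbations, between the reduced split extension $\sigma \circ \mu$ and the amalgam $h_\tau \oplus \sigma_0 \circ \mu$ carrying the trivial--representation summand. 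This is the only step that consumes the KK-hypothesis: inserting the $h_\tau$ summand while keeping the two vertices matched over $C^*(H)$ and the whole amalgam factoring through $\mu$ requires the class of the trivial character on $C^*(H)$ to be balanced across the two vertices, i.e. to lie in the range of $i_1^* - i_2^*$, which rational surjectivity guarantees after passing to a finite multiple.

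The main obstacle is precisely this reconciliation of the reduced picture (factoring through $\mu$) with the amalgamated picture (compatible vertex data over $C^*(H)$): it is here that K-amenability of $G$ (encoded in Lemma \ref{correction}), the Dadarlat--Eilers uniqueness theorem, and the rational KK-surjectivity must all be combined. The delicate point is the \emph{rational} nature of the hypothesis — the matching class over $C^*(H)$ can in general only be solved after multiplying by some $n \in \mathbb N \setminus \{0\}$ — so I must arrange that the resulting multiplicity is absorbed into split summands, which are invisible to strong homotopy, without disturbing either the amalgamation over $C^*(H)$ or the invertibility of $\psi$. Once this matching is achieved, the amalgam of the invertible $\psi_i$ is an invertible extension $\psi$ of $C^*_r(G)$, and the amalgamated and corrected homotopies exhibit $\varphi \oplus \psi$ as strongly homotopic to a split extension, as required.
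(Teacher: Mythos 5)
There is a genuine gap, and it sits at the foundation of your construction: your inverse does not live over the right algebra. You build $\psi$ as the amalgam $\psi_1 *_{C^*(H)} \psi_2$ of inverses of the restrictions $(\varphi\circ\mu)|_{C^*(G_i)}$. By the universal property, such an amalgam is an extension of the \emph{full} amalgamated free product $C^*(G) = C^*(G_1) *_{C^*(H)} C^*(G_2)$, and there is no reason whatsoever for it to factor through $C^*_r(G)$; but the theorem requires an invertible extension \emph{of $C^*_r(G)$} and a strong homotopy through extensions \emph{of $C^*_r(G)$}. Nor can Lemma \ref{correction} repair this a posteriori: the argument (from \cite{ST}, \cite{Th3}) that the tensor-product homotopy $q_B\circ h_{\gamma\otimes\nu_s}$ consists of extensions factoring through $C^*_r(G)$ uses, in the norm estimate for the compact-perturbation term, that $g \mapsto q_B(\gamma(g))$ defines an injective representation of $C^*_r(G)$ — i.e.\ precisely that the split amalgam containing $\varphi\circ\mu\oplus\psi$ already factors through the reduced algebra, which is what is in question. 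Invoking that lemma here is therefore circular. The paper avoids the problem by using K-amenability not only inside Lemma \ref{correction} but at the level of extension groups: by Cuntz's theorem \cite{C}, $\mu^* : \Ext^{-1}\left(C^*_r(G),B\right) \to \Ext^{-1}\left(C^*(G),B\right)$ is an isomorphism, so the inverse of $[\varphi\circ\mu]$ can be realized as $[\varphi''\circ\mu]$ for an \emph{invertible extension $\varphi''$ of $C^*_r(G)$}. That $\varphi''$ — not an amalgam of vertex inverses — is the inverse, and this step is absent from your proposal and cannot be bypassed.

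Your localization of the KK-hypothesis is also misplaced. Rational surjectivity of $i_1^* - i_2^*$ has nothing to do with ``balancing the class of the trivial character $h_\tau$''; the insertion of the $h_\tau$ summand is exactly what Lemma \ref{correction} provides, with no KK-condition. The hypothesis is consumed earlier: once $\left(\varphi'\circ\mu\oplus\varphi''\circ\mu\right)|_{C^*(G_i)} = q_B\circ\pi_i$ for $*$-homomorphisms $\pi_i$, the two lifts agree only modulo $B$ on $C^*(H)$, so the pair $\left(\pi_1|_{C^*(H)},\pi_2|_{C^*(H)}\right)$ defines a class in $KK\left(C^*(H),B\right)$ obstructing their amalgamation. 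In the $K_1\left(\mathcal D_{\beta_0}\right)$-picture of KK-theory, rational surjectivity lets one connect a power $w^n$ of the implementing unitary to a product $w_2^*w_1$ with $w_i \in \mathcal D_{\beta_0}\left(C^*(G_i)\right)$, and after adding $n-1$ copies of $\varphi'\oplus\varphi''$ plus split pieces the obstruction is killed. Only then does Theorem 3.8 of \cite{DE} (the version for general stable $B$ — not Theorem 3.12, which is the $B(\mathbb H)$ statement used in Lemma \ref{correction}) combine with finite-dimensionality of $C^*(H)$ to give an exact matching of $\pi_1$ and $\pi_2$ over $C^*(H)$, so that $\varphi'\circ\mu\oplus\varphi''\circ\mu$ is split. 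Your proposal conflates these two steps, and in doing so attaches the hypothesis to the one stage of the proof that does not need it.
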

\begin{proof} Set $G = G_1 *_HG_2$ and consider an extension $\varphi
  : C^*_r\left(G_1*_HG_2\right) \to Q(B)$. Since $C^*(G) \simeq
  C^*\left(G_1\right) *_{C^*(H)} C^*\left(G_2\right)$ it follows from Proposition 2.8 of
  \cite{Th2} that every
  extension of $C^*(G)$ by $B$ is invertible. As observed in the proof
  of Lemma \ref{correction}, $G$ is $K$-amenable and it follows
  therefore from \cite{C} that $\mu^* : \Ext^{-1}\left(C^*_r(G),B\right) \to
  \Ext^{-1}\left(C^*(G),B\right)$ is an isomorphism. In particular the
  inverse of $\varphi \circ \mu$ is in the range of $\mu^*$, which
  means that there is an invertible extension $\varphi'' : C_r^*\left(G\right)
  \to Q(B)$ such that
\begin{equation}\label{eq102}
\left[\varphi \circ \mu \oplus \varphi''\circ \mu\right] = 0
\end{equation}
in $\Ext^{-1}\left(C^*(G),B\right)$. Let $\beta_0:
C^*_r(G) \to M(B)$ be an absorbing homomorphism, whose existence is guaranteed by
\cite{Th1} and set $\varphi' = \varphi \oplus q_B \circ \beta_0$. By
Lemma 2.2 of \cite{Th2} $\beta_0|_{C^*_r(G_i)} : C^*_r(G_i) \to M(B)$ is absorbing for each $i =
1,2$. Since $G_i$ is amenable $\mu|_{C^*(G_i)} : C^*(G_i) \to C^*_r(G_i)$ is a $*$-isomorphism and it
follows therefore from (\ref{eq102}) that
  $\left(\varphi' \circ \mu \oplus \varphi''\circ
    \mu\right)|_{C^*(G_i)}$ is a split extension for each
  $i$. In other words, there are $*$-homomorphisms $\pi_i :
  C^*\left(G_i\right) \to M(B)$ such that $\left(\varphi' \circ \mu \oplus \varphi''\circ
    \mu\right)|_{C^*(G_i)} = q_B \circ \pi_i, i =
  1,2$. Note that
$$
\pi_1(x) - \pi_2(x) \in B
$$
for all $x \in C^*(H)$ so that $\left(\pi_1,\pi_2\right)$ represents an
element of $KK\left(C^*(H),B\right)$. We need to change the
situation to a case where this pair represents $0$ in
$KK\left(C^*(H),B\right)$. This is done as follows:

$\beta_0|_{C^*(G_i)}, i = 1,2$, are both absorbing so after adding
$q_B \circ \beta_0$ to $\varphi''$ we get a situation
where there are unitaries $u_i \in M(B)$ such that $\Ad u_i \circ
\pi_i(y) - \beta_0(y) \in B$ for all $y \in C^*(G_i), i = 1,2$. Then
$$
\varphi' \circ \mu \oplus \varphi''\circ
    \mu = Ad q_B(u_2^*) \circ \left(\left( q_B \circ \Ad u_2u_1^* \circ
    \beta_0|_{C^*(G_1)}\right) *_{C^*(H)} \left(q_B \circ \beta_0|_{C^*(G_2)}
  \right)\right) .
$$
It follows that we can choose the lifts, $\pi_1,\pi_2$, above such
that $\left[\pi_1|_{C^*(H)},\pi_2|_{C^*(H)}\right] = \left[\Ad w \circ
  \beta_0|_{C^*(H)}, \beta_0|_{C^*(H)} \right]$ in $KK\left(C^*(H),B\right)$
  where $w = u_2u_1^*$. To proceed we need a description of the
  KK-groups obtained in \cite{Th1} and \cite{Th3}: When $A$ is a
  separable $C^*$-algebra
 and $\alpha: A \to
  M(B)$ is an absorbing $*$-homomorphism, there is an isomorphism between $K_1\left(\mathcal
    D_{\alpha}(A)\right)$ and $KK(A,B)$, where
\begin{equation}\label{xx12}
\mathcal
    D_{\alpha}(A) = \left\{ m \in M(B) : \ \alpha(a)m - m \alpha(a)
      \in B \ \forall a \in A \right\}.
\end{equation}
The isomorphism sends a unitary $u \in \mathcal
    D_{\alpha}(A)$ to $\left[\Ad u \circ \alpha,
      \alpha\right]$. Ignoring the passage to matrices in $K_1$ our assumption
    implies, in this picture of KK-theory, that there is an $n > 0$
    and a
  norm-continuous path of unitaries in $\mathcal
  D_{\beta_0}\left(C^*(H)\right)$ connecting $w^n$ to a product
  $w_2^*w_1$, where $w_i \in  \mathcal
  D_{\beta_0}\left(C^*(G_i)\right), i = 1,2$. Then $\left[\Ad w^n \circ
  \beta_0|_{C^*(H)}, \beta_0|_{C^*(H)} \right] = \left[\Ad w_1 \circ
  \beta_0|_{C^*(H)}, \Ad w_2 \circ \beta_0|_{C^*(H)} \right]$ in $KK(C^*(H),B)$. Note that
$$
q_B \circ \beta_0 \circ \mu = \left(q_B \circ \Ad w_1^* \circ \beta_0|_{C^*(G_1)}\right)
*_{C^*(H)} \left(q_B \circ \Ad w_2^* \circ \beta_0|_{C^*(G_2)}\right) .
$$
After adding
$$
 \underset{n-1 \ \text{times}}{\underbrace{\left(\varphi'  \oplus \varphi''
    \right) \oplus \left(\varphi'  \oplus \varphi''
    \right) \oplus \dots \oplus \left(\varphi'  \oplus \varphi''
    \right)} } \oplus q_B \circ \beta_0
$$
to $\varphi''$ we come in a
position where the pair $(\pi_1,\pi_2)$ can be chosen such that
$\left[\pi_1,\pi_2\right] = 0$ in $KK\left(C^*(H),B\right)$. (If we take the
passage to matrices in $K_1$ into account in the previous
argument, it may be necessary to add a finite direct sum of copies of
$q_B \circ \beta_0$ instead of a single copy.)

We can then proceed as follows: Set $\beta =
q_B \circ \beta_0^{\infty}$ where $\beta_0^{\infty}$ is the direct sum
of a sequence of copies of $\beta_0$. By adding $\beta$ to
$\varphi''$ we come then in a situation where Theorem 3.8 of
\cite{DE} applies to give us a continuous path $u_t, t \in
[1,\infty)$, of unitaries in $1 + B$ such that
$$
\lim_{t \to \infty} \Ad u_t \circ \pi_1(x)  = \pi_2(x)
$$
for all $x \in C^*(H)$. Since $C^*(H)$ is finite dimensional we have
that for $t$ large enough there is a unitary $v \in 1 + B$ such that
$vu_t\pi_1(x)u_t^*v^* = \pi_2(x)$ for all $x \in C^*(H)$. Hence, by
exchanging $\pi_1$ with $\Ad vu_t \circ \pi_1$ we conclude that
$\varphi'\circ \mu \oplus \varphi'' \circ \mu$ is split. By a standard
argument, based on Kasparov's stabilization theorem, we may add a split
extension to arrange that $\varphi'\circ \mu \oplus \varphi'' \circ
\mu = q_B \circ \chi \oplus 0$ where $\chi : C^*(G) \to M(B)$ is
a unital $*$-homomorphism. Let $\gamma : G \to M(B)$ be the unitary
representation of $G$ defined by $\chi$ and let
$\zeta_s$ be the continuous path of $*$-homomorphisms
from Lemma \ref{correction}, and $\nu_s$ the corresponding unitary
representations. Let $h_{\gamma \otimes \nu_s}$ be the $*$-homomorphism $C^*(G) \to M(B)$
  defined from the tensor product representation $\gamma \otimes \nu_s$ by use of a spatial isomorphism
  $B \otimes \mathbb K \simeq B$. Then
$$
q_B  \circ h_{\gamma \otimes \nu_s}, \ s \in [0,1],
$$
is a strong homotopy of extensions of $C^*\left(G\right)$ by $B$.
By the argument used in the proof of Theorem 2.3 of \cite{Th3} and
again in the proof of Theorem 2.2 in \cite{ST} the properties of
$\left\{\zeta_s\right\}$ ensure that this homotopy factors through
$C^*_r(G)$ and gives us a strong homotopy, as well as split
extensions $\psi,\psi'$, of $C^*_r(G)$ by $B$ connecting $\varphi
\oplus q_B \circ \beta_0 \oplus \varphi'' \oplus \psi  =\varphi'
\oplus \varphi'' \oplus \psi $ to $\psi'$. Since $q_B \circ
\beta_0 \oplus \varphi'' \oplus \psi$ is invertible, this
completes the proof.
\end{proof}

As in \cite{ST} the fact that the strong homotopy inverse is
invertible implies that the group $\Ext^{-1/2}(C^*_r(G_1 *_H G_2),B)$
of extensions modulo asymptotically split extensions agrees with the
corresponding KK-theory group and can be calculated from the universal
coefficient theorem. The proof is the same as in \cite{ST} and we omit
it here.

The KK-condition of Theorem \ref{thm1} is satisfied when $G_1$ is
abelian since in this case already
the map
$$
i_1^* : KK\left(C^*\left(G_1\right),B\right)  \to KK(C^*(H),B)
$$
is surjective. This follows because there is in this case a $*$-homomorphism
$p : C^*\left(G_1\right) \to C^*(H)$ which is a left-inverse for $i_1$. We get in this way the following corollary.

\begin{cor}\label{abelcor} Let $G_1$ and $G_2$ be countable discrete
  amenable groups with a common finite subgroup $H \subseteq G_i, i
  = 1,2$, and $B$ a separable stable $C^*$-algebra.  Let $G_1 *_H G_2$
  be the amalgamated free product group. Assume that $G_1$ is abelian. It follows that every extension of $C^*_r\left(G_1 *_H
  G_2\right)$ by $B$ is strongly homotopy invertible with an
invertible inverse.
\end{cor}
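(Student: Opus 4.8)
The plan is to deduce Corollary \ref{abelcor} directly from Theorem \ref{thm1} by checking that, under the extra hypothesis that $G_1$ is abelian, the $KK$-theoretic condition of that theorem is satisfied. In fact I would verify the stronger assertion that the single map
$$
i_1^* : KK\left(C^*(G_1),B\right) \to KK\left(C^*(H),B\right)
$$
is already surjective. Since the range of $i_1^* - i_2^*$ contains the range of $i_1^*$ (take the $C^*(G_2)$-coordinate to be $0$), surjectivity of $i_1^*$ immediately gives surjectivity, and \emph{a fortiori} the rational surjectivity, of $i_1^* - i_2^*$. Once this is in hand, Theorem \ref{thm1} applies without change and yields exactly the conclusion: every extension of $C^*_r(G_1 *_H G_2)$ by $B$ is strongly homotopy invertible with an invertible inverse.

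To reduce surjectivity of $i_1^*$ to a concrete construction, the key step is to produce a $*$-homomorphism $p : C^*(G_1) \to C^*(H)$ that is a left-inverse for the inclusion $i_1 : C^*(H) \to C^*(G_1)$, i.e. $p \circ i_1 = \id_{C^*(H)}$. Granting such a $p$, functoriality of $KK(-,B)$ in its first (contravariant) variable gives $i_1^* \circ p^* = (p \circ i_1)^* = \id$ on $KK(C^*(H),B)$, so that $p^*$ is a right-inverse of $i_1^*$ and $i_1^*$ is surjective.

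The construction of $p$ is where abelianness of $G_1$ enters, and it is the one genuinely substantive point. Since $G_1$ is a countable discrete abelian group, $C^*(G_1) \cong C(\widehat{G_1})$ for the compact dual group $\widehat{G_1}$, while $H \subseteq G_1$, being finite and hence (as a subgroup of an abelian group) finite abelian, gives $C^*(H) \cong C(\widehat{H})$ with $\widehat{H}$ finite. Under these identifications $i_1$ is the map dual to the restriction homomorphism $r : \widehat{G_1} \to \widehat{H}$, $r(\chi) = \chi|_H$, so that $i_1 = r^*$. I would then note that $r$ is surjective, because every character of $H$ extends to one of $G_1$ ($\mathbb T$ being divisible, hence injective as an abelian group). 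As $\widehat{H}$ is finite and discrete, choosing one $r$-preimage for each of its finitely many points gives a continuous set-theoretic section $s : \widehat{H} \to \widehat{G_1}$ with $r \circ s = \id$, and then $p := s^* : C(\widehat{G_1}) \to C(\widehat{H})$ is a $*$-homomorphism with $p \circ i_1 = s^* \circ r^* = (r \circ s)^* = \id$, as required.

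I expect the only subtle point to be this last construction: the section $s$ need \emph{not} be a group homomorphism, since a group-theoretic retraction $G_1 \to H$ fixing $H$ may fail to exist (for instance for the subgroup of order two inside $\mathbb Z/4\mathbb Z$). The construction therefore has to be carried out at the level of the spectra, using only continuity of $s$ rather than any homomorphism property. Everything else is a formal reduction to Theorem \ref{thm1}.
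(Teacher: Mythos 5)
Your proposal is correct and follows essentially the same route as the paper: the paper likewise deduces the corollary from Theorem \ref{thm1} by noting that $i_1^*$ alone is already surjective, because abelianness of $G_1$ yields a $*$-homomorphism $p : C^*(G_1) \to C^*(H)$ which is a left-inverse for $i_1$. The only difference is that the paper asserts the existence of $p$ without proof, while you supply the (correct) construction via Pontryagin duality, using a continuous section of the surjective restriction map $\widehat{G_1} \to \widehat{H}$, which exists since $\widehat{H}$ is finite and discrete.
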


\begin{example}\label{Sl(2,Z)} It is known that
$$
Sl_2(\mathbb Z) \simeq \mathbb Z_4 *_{\mathbb Z_2} \mathbb Z_6 ,
$$
cf. p. 11 in \cite{S}. Hence Corollary \ref{abelcor} applies.
(As the generator of $\mathbb Z_4$ one can use
$\left(\begin{smallmatrix} 0 & -1 \\ 1 & 0 \end{smallmatrix} \right)$,
and $\left(\begin{smallmatrix} 1 & -1 \\ 1 & 0 \end{smallmatrix}
\right)$ can serve as the generator of $\mathbb Z_6$. The amalgamation
is over the subgroup $\pm 1$.) It has been shown by Hadwin and Shen in
Corollary 4.4 of \cite{HS} that
one can get an example of an non-invertible extension of
$C^*_r\left(Sl_2(\mathbb Z)\right)$ by $\mathbb K$, starting from the
non-invertible extension of $C^*_r\left(\mathbb F_2\right)$ found by
Haagerup and Thorbj\o rnsen in \cite{HT}. This means that concerning
invertibility of extensions of $C^*_r\left(Sl_2(\mathbb Z)\right)$ the
situation is as for $C_r^*\left(\mathbb F_2\right)$: For every
stabilization $B$ of a unital separable $C^*$-algebra there are
non-invertible extensions of $C^*_r\left(Sl_2(\mathbb Z)\right)$ by $B$, but all are semi-invertible. And the
inverse (for semi-invertibility) can be taken to be invertible.

For the full group $C^*$-algebra $C^*\left(Sl_2(\mathbb Z)\right)$ the
situation is also as for $\mathbb F_2$, namely that all extensions by
$C^*\left( Sl_2(\mathbb Z)\right)$ are invertible. This follows from
\cite{Br} when the ideal is $\mathbb K$ and from \cite{Th2} when it is
an
arbitrary separable stable $C^*$-algebra.

\end{example}

\begin{remark}
The KK-condition of Theorem \ref{thm1} can fail even
when $G_1$ and $G_2$ are finite and equal, and $H$ is abelian. Here is the simplest example.
Let $\alpha$ be the unique non-trivial automorphism of $\mathbb
Z_3$ which has order 2 and let $G_1=\mathbb Z_3\rtimes_\alpha\mathbb Z_2$ be the
semidirect product by this automorphism. Thus $G_1$ is a copy of the
symmetric group $S_3$. Set
$H=\mathbb Z_3\subset G_1$. Let $B=\mathbb K$. Then
$KK(C^*(G),B)\cong R(G)$ for any finite group $G$, where $R(G)$
denotes the Grothendieck group of the semigroup generated by
irreducible (necessarily finite dimensional) representations of
$G$. The functorial map $KK(C^*(G_1),B)\to KK(C^*(H),B)$ becomes the
restriction map $R(G_1)\to R(H)$ after the
above identification. The abelian group $R(H)$ is freely generated by the three
one-dimensional representations, $\rho_0$, $\rho_1$ and $\rho_2$,
that send a fixed generator of $H$ to 1, $e^{2\pi i/3}$ and
$e^{-2\pi i/3}$, respectively. As the number of irreducible
representations equals the number of conjugacy classes by the Burnside
theorem, and as the group order equals the sum of squares of the
dimensions of these representations, it follows that $G_1$ has three
irreducible representations; two, $\sigma_0$ and $\sigma_1$, of
dimension 1 and one, $\tau$, of dimension 2. Thus, $R(G_1)$ is
freely generated by three representations, $\sigma_0$, $\sigma_1$
and $\tau$. One of the one-dimensional representations,
$\sigma_0$, is the identity one, and the other, $\sigma_1$, maps
$H$ to 1 and $G_1\setminus H$ to $-1$. Restrictions of both to $H$
equal the trivial representation $\rho_0$ of $H$. The
two-dimensional representation $\tau$ is the orthogonal complement
to the constant functions in the obvious representation of $G_1$
on $l^2(H)\cong\mathbb C^3$. Then it is easy to see that
$\tau|_H=\rho_1\oplus\rho_2$. Thus, the restriction map $R(G_1)\to
R(H)$ is not surjective.

This example goes only to show that the KK-condition of Theorem \ref{thm1}
is not vacuous. For all we know the conclusion of Theorem
\ref{thm1} may very well be true without this condition.

\end{remark}

\section{Amalgamated free product $C^*$-algebras}

In this section we consider free products of $C^*$-algebras with
amalgamation. The first result is an application of the relative
K-homology developed by the authors in \cite{MT2}.

\begin{thm}\label{amalthm} Let $A_1,A_2$ and $B$ be separable
  $C^*$-algebras, $B$ stable. Let $D$ be a common $C^*$-subalgebra of
  $A_1$ and $A_2$, i.e. $D \subseteq A_1$ and $D \subseteq
  A_2$. Assume that
\begin{enumerate}
\item[1)] there is a $*$-homomorphism $\alpha_0 : A_1 *_D A_2
  \to M(B)$ such that also $\alpha_0|_{A_1}, \alpha_0|_{A_2}$ and
  $\alpha_0|_D$ are absorbing, and assume that
\item[2)] $\Ext(A_1,B)$ and $\Ext(A_2,B)$ are both groups.
\end{enumerate}
It follows that every extension of $A_1 *_D A_2$ by $B$ is strongly homotopy invertible.
\end{thm}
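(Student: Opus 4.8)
The plan is to follow the broad skeleton of the proof of Theorem \ref{thm1}, but to replace the $KK$-theoretic computation over the amalgamating subalgebra by the relative K-homology of \cite{MT2}, and to allow $D$ to be infinite-dimensional, which is exactly what forces the weaker conclusion of strong homotopy invertibility rather than invertibility. Write $A = A_1 *_D A_2$ and fix an extension $\varphi : A \to Q(B)$, with restrictions $\varphi_i = \varphi|_{A_i}$ satisfying $\varphi_1|_D = \varphi_2|_D$. Since $\varphi$ is strongly homotopy invertible as soon as $\varphi \oplus (q_B \circ \alpha_0)$ is, I would first replace $\varphi$ by $\varphi \oplus (q_B \circ \alpha_0)$ and invoke hypothesis 1) to arrange that $\varphi|_{A_1}$, $\varphi|_{A_2}$ and $\varphi|_D$ are all absorbing.

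Next I would manufacture a single candidate inverse $\varphi''$ on all of $A$. Since $\Ext(A_i,B)$ is a group, $\varphi_i$ has an inverse $\varphi_i' : A_i \to Q(B)$, which after adding $q_B \circ \alpha_0|_{A_i}$ I may take to be absorbing. Restricting $\varphi_i \oplus \varphi_i'$ to $D$ shows that $\varphi|_D$ is invertible with inverse $\varphi_i'|_D$; since the inverse of $\varphi|_D$ is unique in the group $\Ext^{-1}(D,B)$, the extensions $\varphi_1'|_D$ and $\varphi_2'|_D$ define the same class and are therefore unitarily equivalent by a unitary $w \in M(B)$. Replacing $\varphi_1'$ by $\Ad q_B(w) \circ \varphi_1'$ I may assume $\varphi_1'|_D = \varphi_2'|_D$, so that $\varphi_1'$ and $\varphi_2'$ amalgamate to one extension $\varphi'' = \varphi_1' *_D \varphi_2'$ of $A$ for which $(\varphi \oplus \varphi'')|_{A_i}$ is split. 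Choosing $*$-homomorphism liftings $\pi_i : A_i \to M(B)$ of $(\varphi \oplus \varphi'')|_{A_i}$, the identity $q_B \circ \pi_1|_D = q_B \circ \pi_2|_D$ gives $\pi_1(d) - \pi_2(d) \in B$ for $d \in D$, so the only obstruction to $\varphi \oplus \varphi''$ being split is the discrepancy between the two liftings $\pi_1|_D$ and $\pi_2|_D$ over $D$.

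This is exactly where the relative K-homology of \cite{MT2} enters and replaces the $KK$-condition used in Theorem \ref{thm1}. The pair $(\pi_1|_D, \pi_2|_D)$ of liftings of the same split extension of $D$ determines a class in $KK(D,B)$, and I would use the relative exact sequence for the inclusions $D \subseteq A_i$ to express the obstruction to strong-homotopy-triviality of $\varphi \oplus \varphi''$ in terms of this class. The hypothesis that $\Ext(A_1,B)$ and $\Ext(A_2,B)$ are groups should make the relevant relative extension semigroup a group, so that $\varphi''$ can be readjusted, by adding invertible relative extensions, until the obstruction class vanishes. With the class removed, Theorem 3.8 of \cite{DE} applies to the absorbing liftings $\pi_1|_D, \pi_2|_D$ to produce a norm-continuous path $u_s \in 1 + B$ of unitaries with $\lim_{s} \Ad u_s \circ \pi_1|_D = \pi_2|_D$; here, in contrast to Theorem \ref{thm1}, where $D = C^*(H)$ is finite-dimensional and an exact unitary can be extracted, the infinite-dimensionality of $D$ leaves only this asymptotic alignment.

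Finally I would feed the asymptotic alignment into the relative homotopy machinery of \cite{MT2} to build a strong homotopy of extensions of $A$, compatible with $\pi_2$ over $D$ at every parameter value, connecting $\varphi \oplus \varphi''$ to a split extension of $A$, and then conclude via Theorem \ref{basic} that $\varphi$ is strongly homotopy invertible, hence semi-invertible. I expect the main obstacle to be precisely this relative step: arranging the relative K-homology so that the $D$-discrepancy of the two liftings is the \emph{complete} obstruction and is removable under the group hypotheses, and then upgrading the merely asymptotic unitary equivalence over the infinite-dimensional $D$ into an honest continuous path of extensions of the whole amalgam. This last point is the analogue of the tensor-product correction used in Theorem \ref{thm1}, which is unavailable for general $C^*$-algebras and is the reason a new method is required.
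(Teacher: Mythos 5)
Your first half --- building the candidate inverse $\varphi'' = \varphi_1' *_D \varphi_2'$ by inverting over $A_1$ and $A_2$, matching the two inverses over $D$ after adding the absorbing $q_B\circ\alpha_0$ and conjugating, and reducing everything to the pair of liftings $\pi_1,\pi_2$ with $\pi_1(d)-\pi_2(d)\in B$ for $d \in D$ --- is exactly the paper's opening. But the core of your argument then takes a route that cannot deliver the stated conclusion. You remove a class in $KK(D,B)$, invoke Theorem 3.8 of \cite{DE} to get unitaries $u_s\in 1+B$ with $\lim_s \Ad u_s\circ \pi_1|_D = \pi_2|_D$, and then propose to ``upgrade'' this asymptotic alignment to a strong homotopy. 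That upgrade is the gap, and it is not merely technical: since $u_s\in 1+B$ we have $q_B(u_s)=1$, so conjugation by these unitaries is literally invisible in $Q(B)$; the asymptotic alignment lives in $M(B)$, and over an infinite-dimensional $D$ what it yields (via the algebra of paths $f$ with $f(1)-f(t)\in B$ and a Bartle--Graves section) is an asymptotic homomorphism splitting $\varphi\oplus\varphi''$, i.e.\ semi-invertibility --- precisely the weaker conclusion of Theorem \ref{amalsemi3}, not strong homotopy invertibility. A strong homotopy requires a continuous path of genuine extensions of $A_1 *_D A_2$, and an amalgamated free product of two extensions is defined only when they agree \emph{exactly} on $D$ at every parameter value; nothing in your construction supplies such exact agreement along a path.

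The paper's proof of Theorem \ref{amalthm} avoids asymptotics entirely, and the missing idea is where the unitaries live. After conjugating $\Phi=\varphi\oplus\varphi''$ (plus a copy of $\alpha = q_B\circ\alpha_0$) by a unitary so that $\Phi_2=\alpha_0|_{A_2}$ exactly, the extension $q_B\circ\Phi_1$ becomes an element of the relative semigroup $\Ext_{D,\alpha|_{A_1}}\left(A_1,B\right)$ of \cite{MT2}; Lemma 3.2 of \cite{MT2} together with hypothesis 2) shows it is invertible there, so after adding the extension $\Phi_1' *_D \alpha|_{A_2}$ one may assume it represents $0$. The point is that triviality in the relative group means, by its very definition, that $\Ad u\circ q_B\circ \Phi_1=\alpha|_{A_1}$ for a unitary $u$ in the connected component of $1$ of the relative commutant $\alpha(D)'\cap Q(B)$ --- a unitary in the Calkin algebra commuting exactly with $\alpha(D)$, not a unitary in $1+B$. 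Choosing a continuous path $u_t$ from $1$ to $u$ inside $\alpha(D)'\cap Q(B)$, each $\Ad u_t\circ q_B\circ\Phi_1$ still restricts to $\alpha|_D$ on $D$, so $\psi_t=\left(\Ad u_t\circ q_B\circ\Phi_1\right) *_D \left(q_B\circ\Phi_2\right)$ is a genuine extension for every $t$, and this path is the desired strong homotopy from $\Phi$ to the split extension $q_B\circ\alpha$. So the relative K-homology is not used to ``express an obstruction in $KK(D,B)$'' at all: its unitary-equivalence relation, implemented by connected unitaries in the relative commutant, is itself the mechanism producing the homotopy, and this is what your proposal is missing.
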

\begin{proof} Set $\alpha = q_B \circ \alpha_0$ and consider an
  extension $\varphi : A_1 *_D A_2 \to Q(B)$. By assumption 2) there
  is an extension $\psi_i : A_i \to Q(B)$ representing
  the inverse of $\varphi|_{A_i}$ in $\Ext(A_i,B)$ both for $i=1$ and $i=2$. Then $\psi_1|_D$
  and $\psi_2|_D$ represent the same element in $\Ext(D,B)$, namely
  the inverse of the element represented by $\varphi|_D$. After
  addition of $\alpha_0|_{A_i}$ to $\varphi|_{A_i}$ we therefore assume
  that $\psi_1|_D$ and $\psi_2|_D$ are unitarily
  equivalent. Thus, after conjugating $\psi_2$ by a unitary, we can
  arrange that $\psi_1|_D = \psi_2|_D$. Then $\psi = \psi_1 *_D
  \psi_2 : A_1 *_D A_2 \to Q(B)$ is defined. Set $\Phi = \varphi
  \oplus \psi$. By adding a copy of $\alpha$ to $\Phi$ both
  extensions $\Phi|_{A_i} : A_i \to Q(B), i = 1,2$,
  become split, i.e. there are $*$-homomorphisms $\Phi_i : A_i \to M(B)$
  such that $q_B \circ \Phi_i = \Phi|_{A_i}, i = 1,2$. By passing to a
  unitarily equivalent extension, i.e. by conjugating $\Phi$ by a unitary of
  the form $q_B(u)$, we can arrange that in addition $q_B \circ \Phi_2 =
  \alpha|_{A_2}$ and that $\Phi_2 = \alpha_0|_{A_2}$. Then $q_B \circ \Phi_1$
  represents an element of the relative extension semi-group
  $\Ext_{D,\alpha|_{A_1}}\left(A_1,B\right)$, cf. \cite{MT2}. In fact, it
  follows from Lemma 3.2 of \cite{MT2} and assumption 2) that $q_B \circ \Phi_1$ is
  invertible in this semi-group, i.e. $q_B \circ \Phi_1 \in
  \Ext_{D,\alpha|_{A_1}}^{-1}\left(A_1,B\right)$. Let $\Phi_1' : A_1
  \to Q(B)$
  represent the inverse of $q_B \circ \Phi_1$ in
  $\Ext^{-1}_{D,\alpha|_{A_1}}\left(A_1,B\right)$ and note that
  $\Phi_1' *_D \alpha|_{A_2} : A_1 *_D A_2 \to Q(B)$ is then
  defined. After addition by this extension to $\Phi$ we can assume that
  $\Phi_1$ represents $0$ in
  $\Ext^{-1}_{D,\alpha|_{A_1}}\left(A_1,B\right)$. By definition of
  $\Ext_{D,\alpha|_{A_1}}\left(A_1,B\right)$ this means that
  there is a unitary $u$ in the connected component of $1$ in the relative commutant of $\alpha(D)$ in
  $Q(B)$ such that $\Ad u \circ q_B \circ \Phi_1 = \alpha|_{A_1}$. Let
  $u_t, t \in [0,1]$, be a continuous path of unitaries in $\alpha(D)'
  \cap Q(B)$ such that $u_0 = 1$ and $u_1 = u$. Then
$$
\psi_t = \left( \Ad u_t \circ q_B \circ \Phi_1\right) *_D \left(q_B \circ
\Phi_2\right)
 $$
is defined for every $t \in [0,1]$, and $\psi_t, t \in [0,1]$, is a
strong homotopy of extensions connecting $\Phi = \psi_0$ to $\psi_1 = q_B \circ \alpha$. This completes the proof.
\end{proof}

Condition 1) of Theorem \ref{amalthm} is always satisfied when $D$ is
nuclear or is the range of a conditional expectation $A_i \to D$ for
both $i =1$ and $i =2$, but it can fail in general. See
\cite{Th2}. Condition 2) is satisfied when $A_1$ and $A_2$ are
nuclear so Theorem \ref{amalthm} has the following corollary.

\begin{cor}\label{nuclearfree} Let $A_1,A_2$ and $B$ be separable
  $C^*$-algebras, $B$ stable. Let $D$ be a common $C^*$-subalgebra of
  $A_1$ and $A_2$, i.e. $D \subseteq A_1$ and $D \subseteq
  A_2$. If $A_1, A_2$ and $D$ are all nuclear it follows that every
  extension of $A_1*_DA_2$ by $B$ is strongly homotopy invertible.
\end{cor}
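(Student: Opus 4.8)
The plan is to deduce this directly from Theorem \ref{amalthm}: once $A_1$, $A_2$ and $D$ are all nuclear, both hypotheses of that theorem become automatic, so the entire task reduces to two essentially independent verifications and no new homotopy machinery is required beyond what Theorem \ref{amalthm} already supplies.

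First I would dispose of condition 2), namely that $\Ext(A_i,B)$ is a group for $i=1,2$; this is where the nuclearity of the $A_i$ enters. By the Choi--Effros lifting theorem every extension of a separable nuclear $C^*$-algebra by $B$ admits a completely positive contractive splitting, i.e. is semisplit. Combined with the absorption theorem for $B$ separable and stable, such semisplit extensions are invertible: one absorbs the given extension into an absorbing extension and produces an inverse by the standard rotation argument. Hence $\Ext(A_i,B)=\Ext^{-1}(A_i,B)$ is a group for each $i$, as required by condition 2).

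The more delicate point, and the one I expect to carry the real content, is condition 1): the existence of a single $*$-homomorphism $\alpha_0 : A_1 *_D A_2 \to M(B)$ whose restrictions to $A_1$, $A_2$ \emph{and} $D$ are \emph{simultaneously} absorbing. It is not enough merely to produce an absorbing homomorphism on the amalgamated free product; one needs all three restrictions to be absorbing at once, which is a genuine compatibility condition across the amalgamation and can fail in general (see \cite{Th2}). Here the nuclearity of the amalgamating subalgebra $D$ is decisive: as recorded in the remark preceding the corollary and established in \cite{Th2}, nuclearity of $D$ guarantees that such a compatible $\alpha_0$ exists. With condition 1) thereby satisfied and condition 2) verified above, Theorem \ref{amalthm} applies directly and yields that every extension of $A_1 *_D A_2$ by $B$ is strongly homotopy invertible, which completes the proof.
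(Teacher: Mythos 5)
Your proposal is correct and follows essentially the same route as the paper: the paper likewise derives the corollary directly from Theorem \ref{amalthm}, noting that nuclearity of $A_1,A_2$ gives condition 2) (invertibility of all extensions, exactly the Choi--Effros plus absorption argument you sketch) and that nuclearity of $D$ gives condition 1) via \cite{Th2}. There is nothing to add.
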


The next theorem shows that condition 2) of Theorem \ref{amalthm} can
be weakened when $D$ is nuclear, at the price of a slightly weaker conclusion.

\begin{thm}\label{amalsemi3}
Let $A_1,A_2$ and $B$ be separable
  $C^*$-algebras, $B$ stable. Let $D$ be a common $C^*$-subalgebra of
  $A_1$ and $A_2$, i.e. $D \subseteq A_1$ and $D \subseteq
  A_2$. Assume that
\begin{enumerate}
\item[1)] there is a $*$-homomorphism $\beta : A_1 *_D A_2
  \to M(B)$ such that $\beta|_D : D \to M(B)$ is absorbing,
\item[2)] that $\Ext(D,B)$ and
  $\Ext\left(D,C_0\left([1,\infty),B\right)\right)$ are both groups,
  and
\item[3)] that all extensions of $A_1$ by $B$ and all extensions of $A_2$
  by $B$ are semi-invertible.
\end{enumerate}

It follows that all extensions of $A_1 *_D A_2$ by $B$ are semi-invertible.
\end{thm}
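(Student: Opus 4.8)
The plan is to follow the architecture of the proof of Theorem~\ref{amalthm}, building a semi-inverse of the given extension as an amalgamated free product of semi-inverses on the two factors; the genuinely new difficulty is that on each factor we now possess only an \emph{asymptotic} section, and these must be reconciled across the amalgam $D$.

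So let $\varphi : A_1 *_D A_2 \to Q(B)$ be an extension. By hypothesis~(3), for each $i$ choose an extension $\psi_i : A_i \to Q(B)$ together with an asymptotic homomorphism $\Pi^i_t : A_i \to M(B)$, $t \in [1,\infty)$, satisfying $q_B \circ \Pi^i_t = \varphi|_{A_i} \oplus \psi_i$. Restricting to $D$, the extension $\varphi|_D \oplus \psi_i|_D$ is asymptotically split, hence vanishes in the group $\Ext(D,B)$ (hypothesis~(2)); thus $\psi_1|_D$ and $\psi_2|_D$ both represent the inverse of $[\varphi|_D]$. Since $\beta|_D$ is absorbing (hypothesis~(1)), after adding the split extension $q_B \circ \beta$ to the $\psi_i$ and conjugating $\psi_2$ by a unitary in $M(B)$ we may arrange $\psi_1|_D = \psi_2|_D$, so that $\psi := \psi_1 *_D \psi_2$ is defined. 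This $\psi$ is the candidate semi-inverse: for $\Phi := \varphi \oplus \psi$ one has $\Phi|_{A_i} = \varphi|_{A_i} \oplus \psi_i$, which is asymptotically split by construction.

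It remains to promote the two factorwise asymptotic sections to a single asymptotic section of $\Phi$ on all of $A_1 *_D A_2$, and this is the crux. The restrictions $\Pi^1_t|_D$ and $\Pi^2_t|_D$ both lift $\Phi|_D$, so they agree modulo $B$ for each $t$; but the universal property of the amalgamated free product demands \emph{exact} agreement before $\Pi^1_t *_D \Pi^2_t$ can be formed. I would proceed in two moves. First, after absorbing further copies of $q_B \circ \beta$ so that the $\Pi^i_t|_D$ are absorbing asymptotic homomorphisms of $D$ representing the same class in $KK(D,B)$, invoke the Dadarlat--Eilers uniqueness machinery---Theorems~3.8 and 3.12 of \cite{DE}, as already exploited in Lemma~\ref{correction}---to obtain a norm-continuous path of unitaries $w_t \in 1 + B$ with $\lim_{t \to \infty}\bigl\|\Ad w_t \circ \Pi^2_t|_D - \Pi^1_t|_D\bigr\| = 0$. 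Second, upgrade this asymptotic agreement to literal agreement: the pair $\bigl(\Pi^1_\bullet|_D,\ \Ad w_\bullet \circ \Pi^2_\bullet|_D\bigr)$, together with its connecting data along the parameter, determines an extension of $D$ by $C_0\bigl([1,\infty),B\bigr)$, and since $\Ext\bigl(D, C_0([1,\infty),B)\bigr)$ is a group (hypothesis~(2)) this parametrized extension is invertible, yielding a continuous correction that forces the two $D$-sections to coincide exactly along $[1,\infty)$.

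With exact agreement on $D$ in hand, the amalgamated free product $\Pi^1_t *_D \bigl(\Ad w_t \circ \Pi^2_t\bigr)$ is a genuine asymptotic homomorphism $A_1 *_D A_2 \to M(B)$ lifting $\Phi$, so $\Phi = \varphi \oplus \psi$ is asymptotically split and $\varphi$ is semi-invertible. The main obstacle is the second move above: the free-product construction is rigid and insists on exact agreement over $D$, whereas the Dadarlat--Eilers unitaries only deliver agreement in the asymptotic limit. It is precisely here that the group hypothesis on $\Ext\bigl(D, C_0([1,\infty),B)\bigr)$ is indispensable, and it is this parametrized correction that takes the place of the strong-homotopy arguments underlying Theorem~\ref{amalthm}.
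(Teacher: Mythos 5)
Your first step coincides with the paper's: the candidate $\psi_1 *_D \psi_2$, the use of the group $\Ext(D,B)$ to identify $[\psi_1|_D]=[\psi_2|_D]$, and the absorbing $\beta|_D$ to arrange $\psi_1|_D=\psi_2|_D$. The argument breaks down at your two ``moves''. In the first move you apply Theorems 3.8 and 3.12 of \cite{DE} to the restrictions $\Pi^i_t|_D$; but those are uniqueness theorems for genuine (admissible) $*$-homomorphisms, whereas the $\Pi^i_t|_D$ are only asymptotic homomorphisms, so they do not apply as stated. Worse, even granting a suitable uniqueness statement, there is a genuine obstruction in $KK(D,B)$ to intertwining the two sections by unitaries in $1+B$, and it cannot be removed by ``absorbing further copies of $q_B\circ\beta$'': adding split extensions leaves the relative class unchanged. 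In the paper this obstruction is the class $\left[\Ad U_1\circ\left(\beta|_D\oplus\beta^{\infty}|_D\right),\,\beta|_D\oplus\beta^{\infty}|_D\right]$, where $U_t$ is a strictly continuous path of unitaries in $M(B)$ (not in $1+B$) produced at the very start from hypothesis 2): the two sections determine extensions of $D$ by $C_0\left([1,\infty),B\right)$, and triviality of that Ext group yields $U_t$. Killing the obstruction requires adding to the semi-inverse the twisted free product $\left(q_B\circ\Ad U_1\circ(\beta\oplus\beta^{\infty})|_{A_1}\right)*_D\left(q_B\circ(\beta\oplus\beta^{\infty})|_{A_2}\right)$, which is split on each $A_i$ but in general \emph{not} split as an extension of $A_1*_D A_2$; so the semi-inverse is not $\psi_1*_D\psi_2$ plus split extensions, as your proposal asserts. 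Even after this correction, reaching a norm-continuous path in $1+B$ is the ``tricky part'' of the paper's proof: one connects $W_n$ to $1$ inside $\mathcal D_{\beta_n\oplus\pi_n}(D)$, invokes asymptotic innerness of automorphisms connected to the identity (8.7.8 and 8.6.12 of \cite{P}, cf.\ Proposition 2.15 of \cite{DE}), patches the resulting paths, and finally uses unitality of $D$ to push the unitaries from the subalgebra $E$ into $1+B$. None of this is present in, or replaceable by, your first move.

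Your second move is misdirected in two ways. First, hypothesis 2) on $\Ext\left(D,C_0\left([1,\infty),B\right)\right)$ is the engine that starts the proof (it produces the multiplier path $U_t$ above); it cannot serve at the end to ``upgrade asymptotic agreement to literal agreement'' --- triviality of a parametrized extension gives unitary equivalence after stabilization, not a correction making two prescribed lifts coincide pointwise in $t$. Second, exact agreement over $D$ is not needed. The paper resolves the rigidity of the free product differently: both corrected sections land in $\mathcal A=\left\{ f\in C_b\left([1,\infty),M(B)\right) : f(1)-f(t)\in B\ \forall t\right\}$, asymptotic agreement over $D$ becomes \emph{exact} agreement in the quotient $\mathcal A/C_0\left([1,\infty),B\right)$, the amalgamated free product of the two sections is formed in that quotient, and a Bartle--Graves continuous right inverse of the quotient map then yields the asymptotic splitting. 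So the correct shape of the argument is: Ext-hypothesis first (multiplier unitaries), then removal of the KK-obstruction by a generally non-split summand, then asymptotic innerness to reach $1+B$, and finally the quotient-plus-selection trick; your proposal has these ingredients out of order and, at the two crucial points, without valid justification.
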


\begin{proof} By adding units to $A_1$, $A_2$ and $D$ if necessary, we may assume that $D$ is unital.

1. step: (Finding the first candidate for the inverse.)

Let $\varphi : A_1 *_D A_2 \to Q(B)$ be an extension. By
  assumption 2) there are extensions $\psi_i : A_i \to Q(B)$ such that
  $\varphi|_{A_i} \oplus \psi_i : A_i \to Q(B)$ are asymptotically
  split, $i = 1,2$. By assumption 2) $\Ext(D,B)$ is a group and
  hence $\left[\psi_1|_D\right] = \left[\psi_2|_D \right] = -\left[\varphi|_D\right]$ in
  $\Ext(D,B) $. (There are various ways to see this; it follows for
  example from Lemma 4.7 of \cite{MT1}.) Furthermore, by assumption 1)
  there is a $*$-homomorphism $\beta: A_1 *_D
  A_2 \to M(B)$ such that $\beta|_D$ is absorbing. So
  after adding by $q_B \circ \beta|_{A_1}$ to $\psi_1$ and $q_B \circ
  \beta|_{A_2}$ to $\psi_2$ we may assume that ${\psi_1}|_D$ and $\psi_2|_D$ are unitarily
    equivalent, and hence without loss of generality that $\psi_1|_D =
    \psi_2|_D$. Then we have a candidate for a semi-inverse to
    $\varphi$, namely $\psi_1*_D \psi_2$. We will show that after
    addition by additional extensions (some of which may be non-trivial), $\varphi \oplus \left(\psi_1 *_D
      \psi_2\right)$ becomes asymptotically split.

2. step: (Removing a KK-obstruction.)

First note that $\varphi
    \oplus \left(\psi_1 *_D \psi_2\right)$ is split over $D$. Hence,
    by adding a copy of $q_B \circ \beta$ to $\varphi$ and conjugating
    by a unitary we can arrange that
\begin{equation}\label{betaonD}
\varphi
    \oplus \left(\psi_1 *_D \psi_2\right)|_D = q_B \circ \beta|_D.
\end{equation}
Let $\xi^i : A_i \to M(B)$, be equi-continuous asymptotic homomorphisms such
    that $q_B \circ \xi^i_t = \varphi|_{A_i} \oplus \psi_i$ for all
    $t$, $i = 1,2$. Note that by (\ref{betaonD}) we have that
\begin{equation}\label{diff}
\xi^i_t(d) - \beta(d) \in B
\end{equation}
for all $t \in [1,\infty), d \in D, i = 1,2$. Let $\beta^{\infty}$ denote the direct sum of a countable infinite
number of copies of $\beta$ and set $\pi = 1_{C_0[1,\infty)}\otimes
\beta^{\infty}$; i.e. $1_{C_0[1,\infty)}$ is the unit in the
multiplier algebra $M\left(C_0[1,\infty)\right)$ and $\pi(x) =
1_{C_0[1,\infty)} \otimes \beta^{\infty}(x) \in M\left(C_0[1,\infty),B\right)$. Then $\pi : D \to
M\left(C_0[1,\infty),B\right)$ is absorbing by Lemma 2.3 of \cite{Th3}. Since
$\Ext\left(D, C_0[1,\infty),B)\right)$ is the trivial group by
assumption 2), this implies that there is
a strictly continuous path $U_t, t \in [1,\infty)$, of unitaries in
$M(B)$ such that
\begin{equation}\label{nu1}
t \mapsto U_t\left(\xi^1_t(d) \oplus \beta^{\infty}(d)\right)U_t^* -
\left(\xi^2_t(d) \oplus \beta^{\infty}(d)\right)
\end{equation}
is in $C_0[1,\infty),B)$ for all $d \in D$. For each $n \in \mathbb
N$, $U_t,  t \in [1,n]$, defines a unitary $W_n$ in $M\left(C[1,n]
  \otimes B)\right)$ in the natural way. Set $\pi_n = 1_{C[1,n]}
\otimes \beta^{\infty}|_D$ and $\beta_n = 1_{C[1,n]}
\otimes \beta|_D$. Then (\ref{nu1}) and (\ref{diff}) imply that
\begin{equation}\label{uuu}
W_n \left ( {\beta_n} \oplus \pi_n\right)(d)W_n^*  -  \left ( {\beta_n} \oplus \pi_n\right)(d)  \in   C[1,n] \otimes B
\end{equation}
for all $d \in D$, i.e. $W_n$ is a unitary in the $C^*$-algebra
$\mathcal D_{\beta_n \oplus \pi_n}(D)$, cf. (\ref{xx12}).
Note that ${\beta_n}
  \oplus \pi_n$ is absorbing, again by Lemma 2.3 of \cite{Th3}, so that $K_1\left(\mathcal D_{\beta_n \oplus \pi_n}(D)\right) =
  KK(D,C[1,n]\otimes B)$ by (3.2) of \cite{Th3}. Identifying
  $KK(D,C[1,n]\otimes B)$ and $KK(D,B)$ we can say that
\begin{equation}\label{KKequ}
\left[\Ad W_n \circ \left({\beta_n} \oplus \pi_n \right) ,\left({\beta_n} \oplus
    \pi_n \right)\right] = \left[\Ad U_1 \circ \left(\beta|_D \oplus
    \beta^{\infty}|_D \right) ,\left(\beta|_D \oplus
    \beta^{\infty}|_D\right) \right].
\end{equation}
in $KK(D,C[1,n]\otimes B)$. Add then the extension
$$
\left(q_B \circ \Ad U_1 \circ (\beta \oplus \beta^{\infty})|_{A_1}\right) *_D \left(q_B
\circ (\beta \oplus \beta^{\infty})|_{A_2}\right)
$$
to $\varphi \oplus \left(\psi_1 *_D
      \psi_2\right)$. We can then exchange $\xi^1_t$ by $\xi^1_t
    \oplus \Ad U_1 \circ \left(\beta \oplus \beta^{\infty}\right)|_{A_1}$,
    $\xi^2_t$ by $\xi^2_t
    \oplus \left(\beta \oplus \beta^{\infty}\right)|_{A_2}$, and $U_t$ by
    $U_t \oplus U_1^*$. We may therefore return to the previous
    notation and conclude from (\ref{KKequ}) that
$$
\left[\Ad W_n \circ \left({\beta_n} \oplus \pi_n \right) ,\left({\beta_n} \oplus
    \pi_n \right)\right] =  0
$$
in $KK(D,C[1,n] \otimes B)$ for all $n$. It follows therefore that
$\diag (W_n,1,1, \dots, 1)$ is in
the connected component of $1$ in the unitary group of
$M_{k_n}\left(\mathcal D_{\beta_n \oplus \pi_n}(D)\right)$ for some
$k_n \in \mathbb N, \ k_n \geq 2$. Since $\beta_n \oplus \pi_n$ is absorbing, there
is an isomorphism from $M_{k_n}\left(\mathcal D_{\beta_n \oplus
    \pi_n}(D)\right)$ onto $M_{2}\left(\mathcal D_{\beta_n \oplus
    \pi_n}(D)\right)$ which takes $\diag (W_n,1,1,\dots,1)$ to
$\diag(W_n,1)$. It follows that $\diag (W_n,1)$ is in
the connected component of $1$ in the unitary group of
$M_{2}\left(\mathcal D_{\beta_n \oplus \pi_n}(D)\right)$ for each
$n$. After addition by the split extension $\beta^{\infty}$ so that we
can substitute $W_n \oplus 1$ for $W_n$, we may
therefore assume that $W_n$ is in the connected component of $1$ in the unitary group of
$\mathcal D_{\beta_n \oplus \pi_n}(D)$ for each $n \in \mathbb N$.

3. step: (The tricky part. This is an elaboration on ideas developed
by Lin, Dadarlat and Eilers, in \cite{L}, \cite{DE}, and a very similar argument was used to
prove Theorem 4.1 in \cite{Th3}.)

 Let $E_n$ denote the $C^*$-subalgebra of $M( C[1,n] \otimes B))$
 generated by the unit $1_{C[0,1]\otimes B}$, $C[1,n] \otimes B$ and $\left(\beta_n \oplus
   \pi_n\right)(D)$. It follows from (\ref{uuu}) that $\Ad W_n$
defines an automorphism $\alpha_n$ of $E_n$, and the path of unitaries
in $\mathcal D_{\beta_n \oplus \pi_n}(D)$ connecting $W_n$ to $1$ gives us a uniform
norm-continuous path of automorphisms in $\Aut E_n$ connecting
$\alpha_n$ to the identity in $\Aut E_n$. Since $E_n$ is separable, it
follows from 8.7.8 and 8.6.12 in \cite{P}, cf. Proposition 2.15 of
\cite{DE}, that $\alpha_n$ is asymptotically inner, i.e. there is a
continuous path $V^n_s, s \in [1,\infty)$, of unitaries in $E_n$ such
that $\alpha_n(x) = \lim_{s \to \infty} V^n_s x{V^n_s}^*$ for all $x
\in E_n$.

Let $F_1 \subseteq F_2 \subseteq F_3 \subseteq \cdots$ be a sequence of finite subsets with dense union in $D$. Since
$$
\lim_{s \to \infty} \sup_{t \in [1,n]} \|V^n_s(t)\left ( \xi^1_t \oplus
  \beta^{\infty}|_D \right)(d)  {V^n_s(t)}^* - U_t\left ( \xi^1_t \oplus
  \beta^{\infty}|_D \right)(d)  {U_t}^* \| = 0
$$
for all $d \in D$, we can find an $s_n \in [1,\infty)$ so big that
\begin{equation}\label{2u}
\|V^n_s(t)\left ( \xi^1_t \oplus
  \beta^{\infty}|_D\right)(d)  {V^n_s(t)}^* - U_t\left ( \xi^1_t \oplus
  \beta^{\infty}|_D \right)(d)  {U_t}^* \|  \leq \frac{1}{n}
\end{equation}
for all $s \geq s_n$, all $t \in [1,n]$ and all $d \in F_n$. Note that
\begin{equation}\label{uuuu}
\lim_{s \to \infty} {V^{n+1}_s(n)}^* V^n_s(n)x {V^n_s(n)}^*V^{n+1}_s(n) = x
\end{equation}
for all $x \in B \cup \left(\xi^1_t \oplus \beta^{\infty}\right)(D), t \in [1,n]$. To simplify notation, set $\Delta^k_s = {V^{k+1}_s(k)}^* V^{k}_s(k)$. It follows from (\ref{uuuu}) that if we increase $s_n$ we can arrange that
\begin{equation}\label{1u}
\| \Delta^k_s  \left ( \xi^1_t \oplus
  \beta^{\infty}|_D \right)(d)  {\Delta^k_s}^* - \left ( \xi^1_t \oplus
  \beta^{\infty}|_D \right)(d)  \| \leq \frac{1}{n^2}
\end{equation}
for all $d \in F_n, t \in [1,n]$, and all $k = 2,3, \cdots ,n$, when
$s \geq s_n$. Proceeding inductively we can arrange that $s_n <
s_{n+1}$ for all $n$. Let $s : [1,\infty) \to [1,\infty)$ be a
continuous increasing function such that $s(n) = s_{n+1}, n =
1,2,3,\cdots $. Define a norm-continuous path $W_t, t \in [1,\infty)$,
in
$$
E = C^*\left(1_B, \left(\xi^1_1 \oplus \beta^{\infty}|_D\right)(D),B
\right) = C^*\left(1_B, \left(\beta \oplus \beta^{\infty}|_D\right)(D),B
\right)
$$
such that $W_t = V^2_{s(t)}(t), t \in [1,2]$, and $W_t = V^{k+1}_{s(t)}(t) \Delta^{k}_{s(t)} \cdots  \Delta^3_{s(t)}\Delta^2_{s(t)}, \ t \in [k,k+1], \ k \geq 2$. Let $d \in F_n$ and consider $t \in [k,k+1]$, where $k \geq n$. Since $s(t) \geq s_{k+1}$ and $d \in F_{k+1}$, it follows from (\ref{1u}) that
\begin{equation}\label{4u}
W_t \left ( \xi^1_t \oplus
  \beta^{\infty}|_D \right)( d) W_t^*   \ \sim_{k \cdot \frac{1}{k^2}} \ V^{k+1}_{s(t)}(t)\left ( \xi^1_t \oplus
  \beta^{\infty}|_D \right)( d) {V^{k+1}_{s(t)}}(t)^* ,
\end{equation}
where $\sim_{\delta}$ means that the distance between the two elements
is at most $\delta$. Furthermore, it follows from (\ref{2u}) that
\begin{equation}\label{3u}
  V^{k+1}_{s(t)}(t) \left ( \xi^1_t \oplus
  \beta^{\infty}|_D \right)( d)  {V^{k+1}_{s(t)}}(t)^* \ \sim_{\frac{1}{k}} \ U_t \left ( \xi^1_t \oplus
  \beta^{\infty}|_D\right)( d) U_t^*  .
\end{equation}
It follows from (\ref{3u}), (\ref{4u}) and (\ref{nu1}) that
\begin{equation}\label{7u}
\lim_{t \to \infty} W_t \left ( \xi^1_t \oplus
  \beta^{\infty}|_D \right)( d)W_t^* - \left ( \xi^2_t \oplus
  \beta^{\infty}|_D \right)( d)   = 0,
\end{equation}
first when $d \in F_n$, and then for all $d \in D$ since $n$ was
arbitrary and $\left\{\xi^i_t\right\}_{i,t}$ equi-continuous.

Recall that $D$ is unital. For each $t$ there are unique
elements $x_t \in D, \lambda_t \in \mathbb C$ and $b_t \in B$ such that
$$
W_t = \left(\xi^1_t\oplus
  \beta^{\infty}|_D\right)(x_t) + \lambda_t { \left(\xi_t^1\oplus
  \beta^{\infty}|_D\right)(1)}^{\perp} + b_t .
$$
Since $q_B \circ \left ( \xi^1_t \oplus
  \beta^{\infty}|_D \right) = q_B \circ \left ( \xi^1_1 \oplus
  \beta^{\infty}|_D \right)$ is injective we find
that $\{x_t\}$ must be a continuous path of unitaries in $D$ such that
$\lim_{t \to \infty} x_tdx_t^* = d$ for all $d \in D$. Set
$$
U_t = W_t\left ( \xi^1_t \oplus
  \beta^{\infty}|_D \right)(x_t)^* + W_t\overline{\lambda_t}\left ( \xi^1_t \oplus
  \beta^{\infty}|_D \right)(1)^{\perp} .
$$
Then $U_t, t \in [1,\infty)$, is a continuous path of unitaries $1 +
B$ such that
$$
\lim_{t \to \infty} U_t\left ( \xi^1_t \oplus
  \beta^{\infty}|_D \right)(d) U_t^* - \left ( \xi^2_t \oplus
  \beta^{\infty}|_D \right)(d) = 0
$$
for all $d \in D$.

4. step: (Conclusion.)

By adding the split extension $q_B \circ
\beta^{\infty}$ we can now return to the notation in the 1. step and
assume that $U_t, t \in [1,\infty)$, is a continuous path of unitaries $1 +
B$ such that
\begin{equation}\label{nyref}
\lim_{t \to \infty} U_t\xi^1_t(d)U_t^* -  \xi^2_t(d)  = 0
\end{equation}
for all $d \in D$. Set
$$
\mathcal A =  \left\{ f \in C_b\left([1,\infty),M(B)\right) : \ f(1)
  -f(t) \in B \ \forall t \in [1,\infty) \right\}
$$
and note that $C_0\left([1,\infty),B\right)$ is an ideal in $\mathcal
A$. Let
$$
p : \mathcal A \to \mathcal A/C_0\left([1,\infty),B\right)
$$
be the quotient map. Define $*$-homomorphisms $\kappa_1 : A_1 \to
\mathcal A$ and $\kappa_2 : A_2 \to \mathcal A$ such that
$\kappa_1(a)(t) = U_t\xi^1_t(a)U_t^*$ and $\kappa_2(a)(t) = \xi^2_t(a)$,
respectively. Since $U_t\xi^1_t(d)U_t^* - \xi^2_t(d) \in D$ for all $t$ and
$d \in D$, it follows from (\ref{nyref}) that
$$
\left(p \circ \kappa_1\right) *_D \left(p \circ \kappa_2\right) : \
A_1 *_D A_2 \to \mathcal A/C_0\left([1,\infty),B\right)
$$
is defined. By composing this $*$-homomorphism with a continuous
right-inverse for $p$, whose existence follows from the Bartle-Graves
selection theorem, we get an asymptotic homomorphism $\Phi : A_1 *_D A_2 \to M(B)$ such that $q_B
\circ \Phi_t = \varphi \oplus \left(\psi_1 *_D
      \psi_2\right)$ for all $t$.
\end{proof}

\begin{cor}\label{nuclearcor} Let $A_1,A_2$ and $B$ be separable
  $C^*$-algebras, $B$ stable. Let $D$ be a common $C^*$-subalgebra of
  $A_1$ and $A_2$, i.e. $D \subseteq A_1$ and $D \subseteq
  A_2$. Assume that
\begin{enumerate}
\item[1)] $D$ is nuclear, and
\item[2)] that all extensions of $A_1$ by $B$ and all extensions of $A_2$
  by $B$ are semi-invertible.
\end{enumerate}

It follows that all extensions of $A_1 *_D A_2$ by $B$ are semi-invertible.
\end{cor}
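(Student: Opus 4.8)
The plan is to obtain this as a direct specialization of Theorem~\ref{amalsemi3}: I would show that nuclearity of $D$ automatically supplies hypotheses 1) and 2) of that theorem, while its hypothesis 3) is verbatim the hypothesis 2) of the present corollary. Once all three hypotheses are in place, Theorem~\ref{amalsemi3} yields the conclusion without further work.

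To verify hypothesis 1), I would appeal to the observation recorded in the remark following Theorem~\ref{amalthm}: when $D$ is nuclear there is a $*$-homomorphism $\alpha_0 : A_1 *_D A_2 \to M(B)$ whose restrictions $\alpha_0|_{A_1}$, $\alpha_0|_{A_2}$ and $\alpha_0|_D$ are all absorbing, a fact established in \cite{Th2}. Taking $\beta = \alpha_0$ gives in particular a $*$-homomorphism $\beta : A_1 *_D A_2 \to M(B)$ with $\beta|_D$ absorbing, which is exactly what hypothesis 1) of Theorem~\ref{amalsemi3} demands. Note that only the weaker requirement on $\beta|_D$ is needed here, so the restriction-absorbing part alone already suffices.

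For hypothesis 2) I would invoke the standard fact that, for a separable nuclear $C^*$-algebra $D$ and any separable stable $C^*$-algebra $C$, the semi-group $\Ext(D,C)$ is a group: by the Choi--Effros lifting theorem every extension of $D$ by $C$ admits a completely positive contractive section, and such semisplit extensions are invertible. It then remains only to confirm that both coefficient algebras occurring in hypothesis 2) are separable and stable. For $B$ this holds by assumption; and $C_0([1,\infty),B) \cong C_0([1,\infty)) \otimes B$ is separable because each tensor factor is, and stable because $B \cong B \otimes \mathbb K$ gives $C_0([1,\infty),B) \cong C_0([1,\infty),B) \otimes \mathbb K$. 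Hence $\Ext(D,B)$ and $\Ext(D,C_0([1,\infty),B))$ are both groups, as required.

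Having checked hypotheses 1), 2) and 3), I would simply quote Theorem~\ref{amalsemi3} to conclude that every extension of $A_1 *_D A_2$ by $B$ is semi-invertible. Since the whole analytic content sits inside Theorem~\ref{amalsemi3}, this corollary presents no serious obstacle; the only points deserving a moment's attention are the passage from nuclearity of $D$ to the absorbing homomorphism $\beta$ (handled by \cite{Th2}) and the elementary verification that $C_0([1,\infty),B)$ inherits separability and stability from $B$.
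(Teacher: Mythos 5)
Your proposal is correct and follows essentially the same route as the paper: the paper's own proof also simply verifies hypotheses 1) and 2) of Theorem~\ref{amalsemi3} from nuclearity of $D$ (citing Lemma 2.2 of \cite{Th2} for the absorbing homomorphism and calling the $\Ext$-group statement well-known), with hypothesis 3) being verbatim the corollary's assumption 2). Your extra details --- deducing condition 1) from the stronger remark after Theorem~\ref{amalthm}, and spelling out the Choi--Effros/semisplit argument together with the separability and stability of $C_0([1,\infty),B)$ --- are sound but constitute the same argument, just written out more explicitly.
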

\begin{proof} It is well-known that condition 2) of Theorem
  \ref{amalsemi3} is fullfilled when $D$ is nuclear. That condition 1)
  also holds follows from Lemma 2.2 of \cite{Th2}.
\end{proof}

One important virtue of Theorem \ref{amalsemi3} and Corollary \ref{nuclearcor} when compared with
Theorem \ref{amalthm} is the improved symmetry between assumptions and
conclusions which allows to use it iteratively, for example to reach the
following conclusion: Let $A_1,A_2,A_3,A_4$ be separable $C^*$-algebras,
$D_1 \subseteq A_1, D_1 \subseteq A_2$, and $D_2 \subseteq A_3, D_2
\subseteq A_4$ common $C^*$-algebras. Assume that the $A_i$'s and
$D_i$'s are all nuclear, and let $E$ be a common nuclear
$C^*$-subalgebra of $A_1 *_{D_1} A_2$ and $A_3 *_{D_2} A_4$. It follows
that all extensions of
$$
\left(A_1 *_{D_1} A_2 \right) *_E \left(A_3 *_{D_2} A_4\right)
$$
by a separable stable $C^*$-algebra $B$ are semi-invertible.

\section{Full group $C^*$-algebras}

In this section we collect some consequences of Theorem \ref{amalthm}
and Theorem \ref{amalsemi3} for the semi-invertibility of extensions by
full group $C^*$-algebras.

\begin{prop}\label{1} Let $G_1,G_2$ be countable discrete groups and $H
  \subseteq G_i, i = 1,2$, a common subgroup. Set $G = G_1 *_H G_2$
  and let $B$ be a separable stable $C^*$-algebra. Assume that
  $\Ext(C^*(G_i),B), i = 1,2$, are both groups. It follows that every
  extension of $C^*(G)$ by $B$ is strongly homotopy invertible.
\end{prop}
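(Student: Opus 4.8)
The plan is to recognize $C^*(G)$ as an amalgamated free product of $C^*$-algebras and to apply Theorem~\ref{amalthm}. The first step is the standard identification
\[
C^*(G_1 *_H G_2) \cong C^*(G_1) *_{C^*(H)} C^*(G_2),
\]
which follows by comparing universal properties: a unitary representation of $G_1 *_H G_2$ is exactly a pair of unitary representations of $G_1$ and $G_2$ agreeing on $H$, and this is precisely the universal property of the amalgamated free product on the right. (Here $C^*(H)$ embeds in each $C^*(G_i)$, and hence in $C^*(G)$, because every representation of $H$ induces up to $G_i$.) Setting $A_i = C^*(G_i)$ and $D = C^*(H)$, it remains to verify the two hypotheses of Theorem~\ref{amalthm}. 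Condition~2) is immediate, since it is exactly the assumption that $\Ext(C^*(G_i),B)$ is a group for $i = 1,2$.

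The only substantive point is condition~1): producing $\alpha_0 : C^*(G) \to M(B)$ whose restrictions to $C^*(G_1)$, $C^*(G_2)$ and $C^*(H)$ are all absorbing. As recorded in the discussion following Theorem~\ref{amalthm}, this holds as soon as $D = C^*(H)$ is the range of a conditional expectation from each $A_i$. I would therefore produce, for any subgroup inclusion $\Lambda \subseteq \Gamma$ of discrete groups, the canonical conditional expectation $C^*(\Gamma) \to C^*(\Lambda)$ of \emph{full} group $C^*$-algebras. This is obtained by inducing the universal representation of $\Lambda$ up to $\Gamma$ and compressing by the projection $P$ onto the identity-coset subspace: the resulting unital completely positive map sends $\delta_\gamma$ to $\lambda_\Lambda(\gamma)$ for $\gamma \in \Lambda$ and to $0$ for $\gamma \notin \Lambda$, hence restricts to the identity on $C^*(\Lambda)$ and lands in $C^*(\Lambda)$ by continuity.

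Applying this with $(\Gamma,\Lambda) = (G,G_i)$ and $(\Gamma,\Lambda) = (G,H)$ gives conditional expectations from $C^*(G)$ onto $C^*(G_1)$, $C^*(G_2)$ and $C^*(H)$. Now choose any absorbing homomorphism $\alpha_0 : C^*(G) \to M(B)$, which exists by \cite{Th1}. By Lemma~2.2 of \cite{Th2} the restriction of an absorbing homomorphism to the range of a conditional expectation is again absorbing, so $\alpha_0|_{C^*(G_1)}$, $\alpha_0|_{C^*(G_2)}$ and $\alpha_0|_{C^*(H)}$ are all absorbing. This verifies condition~1), and Theorem~\ref{amalthm} then yields that every extension of $C^*(G) = A_1 *_D A_2$ by $B$ is strongly homotopy invertible. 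I expect the verification of condition~1)---and within it the passage from reduced to full group $C^*$-algebras, where the conditional expectations require the induced-representation argument rather than the classical trace construction---to be the only delicate point, all the genuine difficulty being already packaged inside Theorem~\ref{amalthm}.
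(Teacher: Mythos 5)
Your proposal is correct and follows essentially the same route as the paper: identify $C^*(G_1 *_H G_2)$ with $C^*(G_1) *_{C^*(H)} C^*(G_2)$, note that condition~2) of Theorem~\ref{amalthm} is the hypothesis, and verify condition~1) by combining the canonical conditional expectations $C^*(G) \to C^*(G_i)$ and $C^*(G) \to C^*(H)$ with the absorbing $*$-homomorphism from \cite{Th1} and the restriction lemma of \cite{Th2}. The only difference is that you spell out the induced-representation construction of these conditional expectations, which the paper simply invokes as canonical.
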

\begin{proof} We can apply Theorem \ref{amalthm} because $C^*(G) =
  C^*(G_1) *_{C^*(H)} C^*(G_2)$. Indeed, there are canonical conditional
  expectations $C^*(G) \to C^*(H)$ and $C^*(G) \to C^*(G_i), i = 1,2$,
  so any absorbing
$*$-homomorpshism $\alpha_0 : C^*(G) \to M(B)$, whose existence is
guaranteed by
\cite{Th1}, will meet the requirements in 1) of Theorem
\ref{amalthm} by Lemma 2.1 of \cite{Th2}. The conclusion of the corollary follows therefore from
Theorem \ref{amalthm}.

\end{proof}

Similarly, Theorem \ref{amalsemi3} implies the following

\begin{prop}\label{groupcor} Let $G_i, i = 1,2$, be discrete
  countable groups with a common subgroup $H \subseteq G_i, i
  = 1,2$, and $B$ a separable stable $C^*$-algebra.  Let $G_1 *_H G_2$
  be the amalgamated free product group and let $B$ be a separable
  stable $C^*$-algebra. Assume that
\begin{enumerate}
\item[1)] $\Ext(C^*(H),B)$ and $\Ext\left(C^*(H), C_0[1,\infty)
    \otimes B\right)$ are both group, and
\item[2)] for both $i = 1$ and $i = 2$ every extension of $C^*(G_i)$
  by $B$ is semi-invertible.
\end{enumerate}

It follows that every extension of $C^*(G_1 *_H G_2)$
  by $B$ is semi-invertible.
\end{prop}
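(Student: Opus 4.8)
The plan is to deduce this directly from \refthm{amalsemi3}, exactly in parallel with the way \refprop{1} was deduced from \refthm{amalthm}. Set $A_i = C^*(G_i)$ for $i=1,2$ and $D = C^*(H)$, and use the canonical identification $C^*(G_1 *_H G_2) \cong C^*(G_1) *_{C^*(H)} C^*(G_2)$. With these choices it suffices to check the three hypotheses of \refthm{amalsemi3}.

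For condition 1) I would take an absorbing $*$-homomorphism $\alpha_0 : C^*(G) \to M(B)$, whose existence is guaranteed by \cite{Th1}, and put $\beta = \alpha_0$. Since there is a canonical conditional expectation $C^*(G) \to C^*(H)$, Lemma 2.1 of \cite{Th2} shows that $\alpha_0|_{C^*(H)} : C^*(H) \to M(B)$ is again absorbing, which is precisely what condition 1) of \refthm{amalsemi3} requires. This is the same argument already used in the proof of \refprop{1}.

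The remaining two hypotheses are a direct translation. Condition 2) of \refthm{amalsemi3} asks that $\Ext(C^*(H),B)$ and $\Ext\left(C^*(H), C_0([1,\infty),B)\right)$ be groups; since $C_0([1,\infty),B) \cong C_0[1,\infty) \otimes B$, this is exactly assumption 1) of the present proposition. Condition 3) of \refthm{amalsemi3} asks that all extensions of $A_1 = C^*(G_1)$ and of $A_2 = C^*(G_2)$ by $B$ be semi-invertible, which is exactly assumption 2) here. With all three hypotheses verified, \refthm{amalsemi3} yields that every extension of $C^*(G_1 *_H G_2)$ by $B$ is semi-invertible.

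I do not anticipate any real obstacle: the only nontrivial point is supplying the homomorphism $\beta$ of condition 1) that restricts to an absorbing homomorphism of $C^*(H)$, and this is settled by the conditional-expectation argument borrowed from \refprop{1}. Everything else is a matching of hypotheses, so the proof is essentially a one-line invocation of \refthm{amalsemi3}.
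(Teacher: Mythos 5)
Your proof is correct and is precisely the argument the paper intends: the paper derives Proposition~\ref{groupcor} from Theorem~\ref{amalsemi3} "similarly" to how Proposition~\ref{1} is derived from Theorem~\ref{amalthm}, i.e.\ via the identification $C^*(G_1 *_H G_2) = C^*(G_1) *_{C^*(H)} C^*(G_2)$, the conditional-expectation argument with Lemma 2.1 of \cite{Th2} for the absorbing restriction to $C^*(H)$, and a direct matching of the remaining hypotheses. No gaps.
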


As is wellknown, condition 1) in Proposition \ref{groupcor} is satisfied
when $H$ is amenable, but it is also satisfied for certain
non-amenable groups, e.g. free groups or
an
amalgamated free product of amenable groups over a finite subgroup.

We shall finish this paper by showing that the conclusions of
Propositions \ref{1} and \ref{groupcor}, and partly also the
conclusion of Theorem \ref{thm1}, are preserved by taking the product
of the group with a group of the form $\mathbb Z^k \oplus H$, with $H$ finite.

\begin{lemma}\label{stab} Let $A$ and $B$ be separable $C^*$-algebras,
  $B$ stable. There are semi-group homomorphisms $\mu : \Ext(A,B) \to
  \Ext(A \otimes \mathbb K,B)$ and $\nu : \Ext(A\otimes \mathbb K,B) \to
  \Ext(A ,B)$ such that $\mu \circ \nu(x) \oplus 0 = x \oplus 0$ for
  all $x \in \Ext(A\otimes \mathbb K,B)$ and $\nu \circ \mu(y) \oplus
  0 = y \oplus 0$ for all $\Ext(A,B)$.
\begin{proof} Since $B$ is stable we can identify $B$ and $\mathbb K
  \otimes B$. Let $e$ be a minimal projection in $\mathbb K$ and let
  $V \in M(\mathbb K \otimes \mathbb K \otimes B)$ be an isometry such
  that $VV^* = e \otimes 1_{\mathbb K \otimes B}$. Then $\alpha(x) = V^*(e
  \otimes x)V$ is an isomorphism $\alpha : \mathbb K \otimes B \to
  \mathbb K \otimes \mathbb K \otimes B$, giving us isomorphisms
  $M\left(\mathbb K \otimes B\right) \to M\left( \mathbb K \otimes
    \mathbb K \otimes B\right)$ and $Q\left(\mathbb K \otimes B\right) \to Q\left( \mathbb K \otimes
    \mathbb K \otimes B\right)$ which we also denote by $\alpha$. Let
  $s : A \to \mathbb K \otimes A$ be the $*$-homomorphism $s(a) = e
  \otimes a$. We can then define a map
\begin{equation}\label{map1}
\Ext\left( \mathbb K \otimes A, \mathbb K \otimes \mathbb K \otimes
  B\right) \to \Ext\left(A,\mathbb K \otimes B\right)
\end{equation}
by $\varphi \mapsto \alpha^{-1} \circ \varphi \otimes s$. To get a map in the other direction note that the canonical embedding
$\mathbb K \otimes M(\mathbb K \otimes B) \subseteq M(\mathbb K
\otimes \mathbb K \otimes B)$ induce a $*$-homomorphism $L : \mathbb K
\otimes Q(\mathbb K \otimes B) \to Q(\mathbb K \otimes \mathbb K
\otimes B)$ which we can use to define a map
\begin{equation}\label{map2}
\Ext\left( A, \mathbb K  \otimes
  B\right) \to \Ext\left(\mathbb K \otimes A, \mathbb K \otimes
  \mathbb K \otimes B\right)
\end{equation}
by $\varphi \mapsto L \circ \left(\id_{\mathbb K} \otimes
  \varphi\right)$. Then $\alpha^{-1} \circ \left(L \circ \left(\id_{\mathbb K} \otimes
    \varphi\right)\right) \circ s =  \Ad q_{\mathbb K \otimes B}(W) \circ \varphi$ for some
isometry $W \in M(\mathbb K \otimes B)$, showing that
$$
\left[ \left(\alpha^{-1} \circ \left(L \circ \left(\id_{\mathbb K} \otimes
    \varphi\right)\right) \circ s\right) \oplus 0 \right] =
\left[\varphi \oplus 0 \right]
$$
in $\Ext\left( A, \mathbb K  \otimes
  B\right)$.

Consider next an extension $\varphi : \mathbb K \otimes A \to Q\left(\mathbb K \otimes
  \mathbb K \otimes B\right)$. Note that
$$
L \circ \left(\id_{\mathbb K} \otimes \left(\alpha^{-1} \circ \varphi
   \circ s\right)\right)(k \otimes a) = L \left( k \otimes \alpha^{-1}\left(\varphi(e \otimes a)\right)\right)
$$
on simple tensors, $k \in \mathbb K, a \in A$. Since the automorphism
of $Q(\mathbb K  \otimes  \mathbb K \otimes A)$ which interchange the
two copies of $\mathbb K$ is given by a unitary in $M\left(\mathbb K
  \otimes \mathbb K \otimes B\right)$, the extension $L \circ \left(\id_{\mathbb K} \otimes \left(\alpha^{-1} \circ \varphi
   \circ s\right)\right)$ is unitarily equivalent to an extension
$\psi : \mathbb K \otimes A \to Q(\mathbb K \otimes \mathbb K \otimes
B)$ such that
$$
\psi(k \otimes a) =  L \left( e \otimes \alpha^{-1}\left(\varphi(k \otimes a)\right)\right)
$$
on simple tensors. Since $L \left( e \otimes \alpha^{-1}\left(\varphi(k \otimes a)\right)\right)
= \Ad q_{\mathbb K \otimes \mathbb K \otimes B}(V)\left(\varphi(k
  \otimes a)\right)$, we see that the two maps, (\ref{map1}) and
(\ref{map2}) are inverses of each other, up to addition by $0$. Since
both maps clearly are semi-group homomorphisms, the proof is complete.

\end{proof}
\end{lemma}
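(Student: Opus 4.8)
The plan is to use the stability of $B$ to identify $B \cong \mathbb{K} \otimes B \cong \mathbb{K} \otimes \mathbb{K} \otimes B$ and to view the passage between $A$ and $A \otimes \mathbb{K}$ through the standard corner--amplification picture: $A$ sits inside $A \otimes \mathbb{K}$ as $e \otimes A$ for a minimal projection $e \in \mathbb{K}$, and conversely an extension of $A$ can be amplified by $\id_{\mathbb{K}}$. I would let $\nu$ be induced by restriction along the corner embedding and $\mu$ by this amplification. The whole content of the lemma is that these two operations are mutually inverse up to the addition of a trivial extension, so the two identities $\mu\circ\nu(x)\oplus 0 = x\oplus 0$ and $\nu\circ\mu(y)\oplus 0 = y\oplus 0$ are what must be established.

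First I would fix the minimal projection $e \in \mathbb{K}$ together with an isometry $V \in M(\mathbb{K}\otimes\mathbb{K}\otimes B)$ satisfying $VV^* = e\otimes 1_{\mathbb{K}\otimes B}$; then $\alpha(x) = V^*(e\otimes x)V$ is an isomorphism of $\mathbb{K}\otimes B$ onto $\mathbb{K}\otimes\mathbb{K}\otimes B$, and it induces isomorphisms of the associated multiplier and Calkin algebras which I denote by the same letter $\alpha$. I would also record the corner embedding $s : A \to \mathbb{K}\otimes A$, $s(a) = e\otimes a$, and the $*$-homomorphism $L : \mathbb{K}\otimes Q(\mathbb{K}\otimes B) \to Q(\mathbb{K}\otimes\mathbb{K}\otimes B)$ induced by the canonical inclusion $\mathbb{K}\otimes M(\mathbb{K}\otimes B)\subseteq M(\mathbb{K}\otimes\mathbb{K}\otimes B)$. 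With these ingredients, $\nu$ sends the class of an extension $\varphi : A\otimes\mathbb{K}\to Q(\mathbb{K}\otimes\mathbb{K}\otimes B)$ to that of $\alpha^{-1}\circ\varphi\circ s$, and $\mu$ sends the class of $\varphi : A\to Q(\mathbb{K}\otimes B)$ to that of $L\circ(\id_{\mathbb{K}}\otimes\varphi)$. Both constructions respect unitary equivalence and carry direct sums to direct sums, so they descend to semi-group homomorphisms.

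The heart of the argument is to compute the two composites on simple tensors. For $\nu\circ\mu$ the amplification and the corner compression telescope, leaving $\alpha^{-1}\circ L\circ(\id_{\mathbb{K}}\otimes\varphi)\circ s$, which on a simple tensor equals $\Ad q_{\mathbb{K}\otimes B}(W)\circ\varphi$ for a suitable isometry $W\in M(\mathbb{K}\otimes B)$; since conjugation by an isometry alters an extension only up to addition of the zero extension, this yields $\nu\circ\mu(y)\oplus 0 = y\oplus 0$. For $\mu\circ\nu$ one arrives at $L\circ(\id_{\mathbb{K}}\otimes(\alpha^{-1}\circ\varphi\circ s))$, which on a simple tensor $k\otimes a$ equals $L(k\otimes\alpha^{-1}(\varphi(e\otimes a)))$; here the extra copy of $\mathbb{K}$ has ended up in a different tensor slot. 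I would move it into place using that the flip automorphism of $Q(\mathbb{K}\otimes\mathbb{K}\otimes B)$ interchanging the two $\mathbb{K}$-factors is implemented by a unitary in $M(\mathbb{K}\otimes\mathbb{K}\otimes B)$, hence is trivial in $\Ext$, and then recognize the remaining map as $\Ad q(V)\circ\varphi$, again a conjugation by an isometry. This delivers $\mu\circ\nu(x)\oplus 0 = x\oplus 0$.

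The step I expect to be the main obstacle is the bookkeeping in the last computation: one must keep precise track of which $\mathbb{K}$-factor each map targets, verify that the flip is genuinely inner at the level of the multiplier algebra (so that it descends to an honest unitary equivalence rather than only an asymptotic one), and confirm that the leftover conjugations by $V$ and $W$ are exactly the moves that become trivial after adding $0$. Individually none of these points is deep, but the coherence of the identifications $B\cong\mathbb{K}\otimes B\cong\mathbb{K}\otimes\mathbb{K}\otimes B$ must be maintained throughout, since a careless identification is precisely where a spurious obstruction could sneak in.
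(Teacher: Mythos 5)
Your proposal is correct and follows essentially the same route as the paper's own proof: the same corner embedding $s(a) = e \otimes a$, the same isomorphism $\alpha(x) = V^*(e \otimes x)V$, the same amplification map $L \circ (\id_{\mathbb K} \otimes \varphi)$, and the same two key observations that conjugation by an isometry is trivial after adding $0$ and that the flip of the two $\mathbb K$-factors is implemented by a multiplier unitary. Nothing further is needed.
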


\begin{cor}\label{matrix!} Let $A$ and $B$ be separable
  $C^*$-algebras, $B$ stable. Then all extensions of $A$ by $B$ are
  semi-invertible or strongly homotopy invertible if and only if the
  same is true for all extensions of $M_n(A)$ by $B$, for any $n \in
  \mathbb N$.
\end{cor}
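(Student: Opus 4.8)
The plan is to deduce \refcor{matrix!} from \reflemma{stab} together with the elementary $*$-isomorphism $M_n(A)\otimes\mathbb K = A\otimes M_n\otimes\mathbb K\cong A\otimes\mathbb K$ (which holds because $M_n\otimes\mathbb K\cong\mathbb K$). First I would recast the assertion as a \emph{stabilization} statement: for every separable $C^*$-algebra $C$, all extensions of $C$ by $B$ are semi-invertible (resp.\ strongly homotopy invertible) if and only if all extensions of $C\otimes\mathbb K$ by $B$ are. Granting this, the corollary follows by applying it to $C=A$ and to $C=M_n(A)$ and transporting everything across the isomorphism $M_n(A)\otimes\mathbb K\cong A\otimes\mathbb K$, which carries $\Ext(M_n(A)\otimes\mathbb K,B)$ onto $\Ext(A\otimes\mathbb K,B)$ and preserves both notions of invertibility; for $n=1$ there is nothing to prove, so one gets the equivalence for each $n$.

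To prove the stabilization statement I would isolate the following transfer principle. Let $\mathcal S(C)\subseteq\Ext(C,B)$ be the sub-semigroup of classes of asymptotically split extensions (for the strong-homotopy version let $\mathcal T(C)$ be the sub-semigroup of classes strongly homotopic to a split extension); both are genuine sub-semigroups containing $0$, hence closed under $\oplus 0$, and both are invariant under unitary equivalence. By definition $x\in\Ext(C,B)$ is semi-invertible exactly when $x\oplus y\in\mathcal S(C)$ for some $y$, and strongly homotopy invertible exactly when $x\oplus y\in\mathcal T(C)$ for some $y$. Now suppose $f\colon\Ext(C,B)\to\Ext(C',B)$ and $g\colon\Ext(C',B)\to\Ext(C,B)$ are semi-group homomorphisms respecting these sub-semigroups and satisfying $g\circ f(x)\oplus 0=x\oplus 0$ and $f\circ g(x')\oplus 0=x'\oplus 0$. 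If every element of $\Ext(C',B)$ is semi-invertible and $x\in\Ext(C,B)$ is given, choose $y'$ with $f(x)\oplus y'\in\mathcal S(C')$; applying $g$ gives $g(f(x))\oplus g(y')\in\mathcal S(C)$, and since $x\oplus(0\oplus g(y'))=(g(f(x))\oplus g(y'))\oplus 0$ lies in $\mathcal S(C)$ (commutativity of $\oplus$ and closure under $\oplus 0$), the class $x$ is semi-invertible. The identical argument works verbatim with $\mathcal T$ in place of $\mathcal S$.

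Applying this with $(f,g)=(\mu,\nu)$ and with $(f,g)=(\nu,\mu)$ from \reflemma{stab}, taking $C'=C\otimes\mathbb K$, yields the stabilization statement in both directions, \emph{provided} $\mu$ and $\nu$ respect $\mathcal S$ and $\mathcal T$. This verification is the only genuine work and the step I expect to be the main obstacle, although it is routine: unwinding the definitions in \reflemma{stab}, the map $\nu$ is composition with the fixed $*$-homomorphism $s$ on the right and the isomorphism $\alpha^{-1}$ on the left, so it evidently sends split, asymptotically split, and strongly-homotopic-to-split extensions to extensions of the same type; for $\mu=L\circ(\id_{\mathbb K}\otimes-)$ one checks that a lift $\pi_t\colon C\to M(\mathbb K\otimes B)$ of $\varphi$ (a genuine lift, an asymptotic lift, or a homotopy of lifts) induces the lift $k\otimes a\mapsto$ (image of $k\otimes\pi_t(a)$ under the canonical embedding) of $\mu(\varphi)$, using that tensoring an asymptotic homomorphism by the genuine homomorphism $\id_{\mathbb K}$ again gives an asymptotic homomorphism and that $L$ intertwines the quotient maps. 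Combining the transfer principle with the isomorphism $M_n(A)\otimes\mathbb K\cong A\otimes\mathbb K$ then completes the proof.
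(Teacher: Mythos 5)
Your proposal is correct and follows essentially the same route as the paper, which states Corollary~\ref{matrix!} without proof as an immediate consequence of Lemma~\ref{stab}: one transports semi-invertibility and strong homotopy invertibility through the maps $\mu$ and $\nu$ and the isomorphism $M_n(A)\otimes\mathbb K\cong A\otimes\mathbb K$. Your transfer principle and the verification that $\mu$ and $\nu$ preserve the sub-semigroups of asymptotically split and strongly-homotopic-to-split classes simply make explicit the routine details the paper leaves to the reader.
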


\begin{lemma}\label{sum} Let $A_1,A_2$ and $B$ be separable
  $C^*$-algebras, $B$ stable. Assume that all extensions of $A_i$ by
  $B$ are semi-invertible or are strongly homotopy invertible (with an invertible inverse), $i =1,2$. It follows that all extensions of
  $A_1 \oplus A_2$ by $B$ have the same property.
\begin{proof} Let $p_i : A_1 \oplus A_2 \to A_i \subseteq A_1 \oplus A_2, i =1,2$, be the
  canonical projections, and consider an extension $\varphi : A_1
  \oplus A_2 \to Q(B)$. By a standard rotation argument $\varphi \oplus
  0$ is
  strongly homotopic to the sum $\left(\varphi \circ p_1\right) \oplus
  \left(\varphi \circ p_2\right)$. The conclusion follows from this by
  use of Theorem \ref{basic}.
\end{proof}
\end{lemma}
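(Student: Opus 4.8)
The plan is to reduce the given extension of $A_1\oplus A_2$ to the extensions it induces on the two summands, to which the hypothesis applies. Write $p_i : A_1\oplus A_2 \to A_i$ for the canonical projection and $\iota_i : A_i \to A_1\oplus A_2$ for the inclusion of the $i$-th summand, so that $p_i\circ \iota_i = \id_{A_i}$, and set $\varphi_i = \varphi\circ\iota_i : A_i \to Q(B)$; this is an extension of $A_i$. The two extensions $\varphi\circ p_1$ and $\varphi\circ p_2$ of $A_1\oplus A_2$ have \emph{orthogonal ranges}, since $p_1(x)p_2(y)=0$ in $A_1\oplus A_2$, and they add up to $\varphi$ because $p_1+p_2=\id$. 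Note also that $\varphi\circ p_i = \varphi_i\circ p_i$.

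First I would set up the standard rotation homotopy exhibiting $\varphi\oplus 0 \sim (\varphi\circ p_1)\oplus(\varphi\circ p_2)$. Fix isometries $V_1,V_2\in M(B)$ with $V_1V_1^*+V_2V_2^*=1$, identify $Q(B)$ with $M_2(Q(B))$ through $q_B(V_1),q_B(V_2)$, and let $R_t\in M_2(Q(B))$ be the image of the scalar rotation $\left(\begin{smallmatrix}\cos\frac{\pi t}{2} & -\sin\frac{\pi t}{2}\\ \sin\frac{\pi t}{2} & \cos\frac{\pi t}{2}\end{smallmatrix}\right)$, $t\in[0,1]$. Then
\begin{equation*}
\psi_t(a) = \begin{pmatrix} \varphi(p_1(a)) & 0 \\ 0 & 0 \end{pmatrix} + R_t \begin{pmatrix} \varphi(p_2(a)) & 0 \\ 0 & 0 \end{pmatrix} R_t^{*}
\end{equation*}
is a norm-continuous path with $\psi_0 = \varphi\oplus 0$ and $\psi_1 = (\varphi\circ p_1)\oplus(\varphi\circ p_2)$. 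The step I expect to require the most care is verifying that each $\psi_t$ is genuinely a $*$-homomorphism: multiplicativity on the two diagonal blocks is automatic, while the cross terms that would otherwise obstruct it are built from products $\varphi(p_1(a))\varphi(p_2(b)) = \varphi(p_1(a)p_2(b)) = 0$, so they vanish precisely because the ranges are orthogonal. Granting this, $\{\psi_t\}$ is a strong homotopy.

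Next I would construct the candidate inverse from the inverses on the summands. Let $\varphi_i'$ be the inverse supplied by the hypothesis, so that $\varphi_i\oplus\varphi_i'$ is asymptotically split (resp. strongly homotopic to a split extension, with $\varphi_i'$ invertible), and put $\psi = (\varphi_1'\circ p_1)\oplus(\varphi_2'\circ p_2)$. Composing an asymptotic section (resp. a homotopy through split extensions) with $p_i$ shows that $(\varphi_i\oplus\varphi_i')\circ p_i = (\varphi_i\circ p_i)\oplus(\varphi_i'\circ p_i)$ is again asymptotically split (resp. strongly homotopic to split); in the invertible case one checks similarly that $\varphi_i'\circ p_i$, and hence $\psi$, is invertible. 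Reordering summands by unitary equivalence gives
\begin{equation*}
\left[(\varphi\circ p_1)\oplus(\varphi\circ p_2)\right]\oplus \psi \cong \left[(\varphi_1\oplus\varphi_1')\circ p_1\right]\oplus\left[(\varphi_2\oplus\varphi_2')\circ p_2\right],
\end{equation*}
which is asymptotically split (resp. strongly homotopic to split) as a direct sum of extensions with that property.

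Finally I would transport this back along the rotation homotopy. Adding $\psi$ to the homotopy of the second paragraph yields $\varphi\oplus 0\oplus\psi \sim \left[(\varphi\circ p_1)\oplus(\varphi\circ p_2)\right]\oplus\psi$, whose right-hand side was just shown to be asymptotically split (resp. strongly homotopic to a split extension). In the semi-invertible case, \refthm{basic} then forces $\varphi\oplus(0\oplus\psi)$ to be asymptotically split, so $0\oplus\psi$ is a semi-inverse of $\varphi$. In the strongly homotopy invertible case, concatenating the two homotopies shows directly that $\varphi\oplus(0\oplus\psi)$ is strongly homotopic to a split extension, and $0\oplus\psi$ is invertible; hence $\varphi$ is strongly homotopy invertible with an invertible inverse. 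In either case every extension of $A_1\oplus A_2$ by $B$ has the stated property.
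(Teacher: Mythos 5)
Your proof is correct and follows essentially the same route as the paper's: the paper also reduces to the strong homotopy $\varphi\oplus 0 \sim (\varphi\circ p_1)\oplus(\varphi\circ p_2)$ via the standard rotation argument and then invokes Theorem~\ref{basic}. You have merely written out the details the paper leaves implicit — the explicit rotation path (whose multiplicativity rests on $\varphi(p_1(a))\varphi(p_2(b))=0$), the candidate inverse $(\varphi_1'\circ p_1)\oplus(\varphi_2'\circ p_2)$, and the bookkeeping for the invertible-inverse variant — all of which check out.
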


By combining Corollary \ref{matrix!} and Lemma \ref{sum} we get the following.

\begin{cor}\label{finte} Let $A,F$ and $B$ be separable
  $C^*$-algebras, $B$ stable, $F$ finite dimensional. Assume that all extensions of $A$ by
  $B$ are semi-invertible or are strongly homotopy invertible (with an
  invertible inverse). It follows that all extensions of $F \otimes A$
  by $B$ have the same property.
\end{cor}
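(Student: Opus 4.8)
The plan is to reduce the statement to the two preceding results by exploiting the structure theorem for finite-dimensional $C^*$-algebras. Write $(\mathcal P)$ for whichever of the two properties is under consideration, i.e.\ either ``every extension is semi-invertible'' or ``every extension is strongly homotopy invertible with an invertible inverse''; both Corollary \ref{matrix!} and Lemma \ref{sum} have been phrased so as to preserve $(\mathcal P)$ in either interpretation, so it suffices to argue uniformly while tracking a single fixed property throughout.

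First I would invoke the classification of finite-dimensional $C^*$-algebras to write $F \cong \bigoplus_{j=1}^k M_{n_j}(\mathbb C)$ for suitable integers $n_1,\dots,n_k$. Tensoring with $A$ then gives a canonical isomorphism
$$
F \otimes A \;\cong\; \bigoplus_{j=1}^k M_{n_j}(\mathbb C) \otimes A \;\cong\; \bigoplus_{j=1}^k M_{n_j}(A),
$$
so the task becomes showing that property $(\mathcal P)$ passes from $A$ to this finite direct sum of matrix amplifications.

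Next I would apply Corollary \ref{matrix!} to each summand: since all extensions of $A$ by $B$ enjoy $(\mathcal P)$, that corollary yields that all extensions of $M_{n_j}(A)$ by $B$ enjoy $(\mathcal P)$, for every $j = 1,\dots,k$. Finally I would feed these $k$ summands into Lemma \ref{sum}. That lemma is stated for two summands, so to handle $k$ of them I would induct on $k$, grouping $\bigoplus_{j=1}^k M_{n_j}(A) = \left(\bigoplus_{j=1}^{k-1} M_{n_j}(A)\right) \oplus M_{n_k}(A)$ and applying Lemma \ref{sum} once at each stage, the base case $k=1$ being immediate. This produces property $(\mathcal P)$ for $\bigoplus_{j=1}^k M_{n_j}(A) \cong F \otimes A$, as desired.

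There is no genuinely hard step here; the whole argument is bookkeeping layered on top of Corollary \ref{matrix!} and Lemma \ref{sum}. The only points demanding a moment's care are the passage from the two-summand form of Lemma \ref{sum} to a direct sum of arbitrarily many matrix blocks, which the routine induction above dispatches, and the discipline of carrying the \emph{same} property $(\mathcal P)$ consistently through both cited results rather than silently switching between ``semi-invertible'' and ``strongly homotopy invertible with an invertible inverse''.
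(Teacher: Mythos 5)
Your proof is correct and is exactly the argument the paper intends: the paper derives Corollary \ref{finte} with the single sentence ``By combining Corollary \ref{matrix!} and Lemma \ref{sum}'', and your write-up simply makes explicit the decomposition $F \otimes A \cong \bigoplus_{j=1}^k M_{n_j}(A)$, the application of Corollary \ref{matrix!} to each block, and the routine induction needed to iterate the two-summand Lemma \ref{sum}.
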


In particular, it follows that if $G$ is a countable discrete group
with the property that all extensions of $C_r^*(G)$ by $B$ are
semi-invertible or
strongly homotopy invertible (with an invertible inverse), then the same
is true for $C^*_r\left(H\times G\right)$ for any finite group $H$.

\begin{lemma}\label{anne!!} Let $A$ and $B$ be separable
  $C^*$-algebras, $B$ stable. Assume that all extensions of $A$ by $B$
  are semi-invertible or strongly homotopy invertible. It follows that
  all extensions of $C(\mathbb
  T)\otimes A$ by $B$ have the same property.
\begin{proof} Let $\chi $ be the automorphism of $C(\mathbb T) \otimes
  A$ such that $\chi(f)(z) = f(\overline{z})$ and let $\ev : C(\mathbb T)
  \otimes A \to A$ be evaluation at $1 \in \mathbb T$. As is wellknown
  the $*$-homomorphism $C(\mathbb T) \otimes A \to M_2\left(C(\mathbb
    T) \otimes A \right)$ defined such that
$$
f \mapsto \left( \begin{smallmatrix} f & \\ & \chi(f)
  \end{smallmatrix} \right)
$$
is homotopic to a $*$-homomorphism which factorizes through $\ev$. It
follows that for any extension $\varphi : C(\mathbb T) \otimes A \to Q(B)$
the extension $\varphi \oplus \varphi \circ \chi$ is strongly
homotopic to an extension of the form $\psi \circ \ev$, where $\psi :
A \to Q(B)$ is an extension of $A$ by $B$. By assumption there is an extension $\psi'$ of $A$ by $B$
such that $\psi \oplus \psi'$ is either asymptotically split or strongly
homotopic to a split extension. It follows that
$\varphi
\oplus \varphi \circ \chi \oplus \psi' \circ \ev $ has the same
property by Theorem \ref{basic}. Hence $\varphi$ is
semi-invertible or strongly homotopy invertible, as the case may be.
\end{proof}
\end{lemma}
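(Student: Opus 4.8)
The plan is to use the flip automorphism of the circle factor to turn $\varphi\oplus(\varphi\circ\chi)$ into an extension pulled back from $A$ along evaluation, and then to feed this into the hypothesis on $A$ together with Theorem \ref{basic}. Introduce the automorphism $\chi$ of $C(\mathbb T)\otimes A$ with $\chi(f)(z)=f(\overline z)$ and the evaluation $\ev : C(\mathbb T)\otimes A \to A$ at $1\in\mathbb T$. The essential input---classical, but the one step that really uses the circle---is that the $*$-homomorphism
$$
\Delta : C(\mathbb T)\otimes A \to M_2\!\left(C(\mathbb T)\otimes A\right), \qquad \Delta(f) = \begin{pmatrix} f & 0 \\ 0 & \chi(f)\end{pmatrix},
$$
is homotopic through $*$-homomorphisms to one factoring through $\ev$. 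Indeed the class of the generating unitary $z$ in $K_1(C(\mathbb T))$ is annihilated by that of $\chi(z)=\overline z$, so $\diag(z,\overline z)$ is joined to a constant unitary by a norm-continuous path of unitaries in $M_2(C(\mathbb T))$; conjugating $\Delta$ along such a path deforms it to a $*$-homomorphism constant in the $\mathbb T$-variable, i.e. of the form $\rho\circ\ev$ for some $\rho : A \to M_2(C(\mathbb T)\otimes A)$.

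Given an extension $\varphi : C(\mathbb T)\otimes A \to Q(B)$, note that $\varphi\oplus(\varphi\circ\chi)$ is exactly the $M_2$-amplification $\varphi_{(2)}$ of $\varphi$ precomposed with $\Delta$. Composing the homotopy above with the fixed $*$-homomorphism $\varphi_{(2)}$ yields a strong homotopy of extensions connecting $\varphi\oplus(\varphi\circ\chi)$ to an extension of the form $\psi\circ\ev$, where $\psi=\varphi_{(2)}\circ\rho : A \to Q(B)$. By hypothesis there is an extension $\psi'$ of $A$ by $B$ such that $\psi\oplus\psi'$ is asymptotically split (respectively, strongly homotopic to a split extension). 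Pulling this back along $\ev$---i.e. composing the asymptotic section, or the strong homotopy together with its splitting, with $\ev$---shows that $(\psi\oplus\psi')\circ\ev$ is again asymptotically split (respectively, strongly homotopic to a split extension).

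Finally I would assemble the pieces: adding $\psi'\circ\ev$ to the strong homotopy of the second paragraph gives a strong homotopy from $\varphi\oplus\bigl((\varphi\circ\chi)\oplus(\psi'\circ\ev)\bigr)$ to $(\psi\oplus\psi')\circ\ev$. In the semi-invertible case Theorem \ref{basic} then forces $\varphi\oplus\bigl((\varphi\circ\chi)\oplus(\psi'\circ\ev)\bigr)$ to be asymptotically split, so $(\varphi\circ\chi)\oplus(\psi'\circ\ev)$ is a semi-inverse of $\varphi$; in the strongly-homotopy-invertible case, concatenating the two strong homotopies exhibits $\varphi\oplus\bigl((\varphi\circ\chi)\oplus(\psi'\circ\ev)\bigr)$ as strongly homotopic to a split extension, so the same element is a strong-homotopy inverse. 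Either way $\varphi$ has the desired property. The only genuine content is the circle homotopy of the first paragraph; everything else is bookkeeping with direct sums and a single appeal to Theorem \ref{basic}.
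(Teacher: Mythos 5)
Your proof is correct and follows the paper's argument essentially verbatim: the same reduction of $\varphi\oplus(\varphi\circ\chi)$ to a pull-back $\psi\circ\ev$ via the classical homotopy of $f\mapsto\diag(f,\chi(f))$, the same use of the hypothesis on $A$ applied to $\psi$, and the same conclusion via Theorem \ref{basic} in the semi-invertible case (resp. concatenation of strong homotopies in the other case). The paper, like you, treats the circle homotopy as well known and offers no proof of it.

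One caveat concerning the justification you do sketch for that homotopy: the $K$-theoretic input is right ($\diag(z,\overline z)$ has trivial class, indeed it takes values in $SU(2)$, which is simply connected, so there is a norm-continuous path $u_s$ of unitaries in $M_2(C(\mathbb T))$ from $\diag(z,\overline z)$ to $1_2$), but \emph{conjugating} $\Delta$ along such a path accomplishes nothing. The image of $\Delta$ consists of diagonal matrices which commute with $\diag(z,\overline z)$, and conjugation preserves the pointwise spectrum, so $\Ad u_s\circ\Delta$ can never become constant in the $\mathbb T$-variable. The homotopy is instead obtained by running the path through the generating unitary: $\Delta_s(f\otimes a)=f(u_s)\otimes a$, with $f(u_s)\in M_2(C(\mathbb T))$ given by functional calculus, defines a norm-continuous path of $*$-homomorphisms with $\Delta_0=\Delta$ and $\Delta_1$ factoring through $\ev$ (equivalently, invoke the universal property of $C(\mathbb T)\otimes A$ as generated by $A$ together with a commuting unitary, and replace that unitary by $u_s$). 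With this repair your argument coincides with the paper's.
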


\begin{prop}\label{directsum} Let $G$ be a countable discrete
  group, $H$ a finite group and $k \in \mathbb N$. Let $B$ be a
  separable stable $C^*$-algebra and assume that all extensions of
  $C^*_r(G)$, (resp. $C^*(G)$), by $B$ are semi-invertible or strongly
  homotopy invertible. It follows
  that all extensions of $C^*_r\left( \mathbb Z^k \times H \times
    G\right)$, (resp. $C^*\left( \mathbb Z^k \times H \times
    G\right)$), by $B$ have the same property.
\begin{proof} Note that $C^*_r\left(\mathbb Z^k \times H \times
    G\right) \simeq C\left(\mathbb T^k\right) \otimes C^*(H) \otimes
  C_r^*(G)$, and that $C^*(H)$ is finite dimensional. It follows then
  from Corollary \ref{finte} and Lemma \ref{anne!!} that all extensions of $C^*_r\left( \mathbb Z^k \times H \times
    G\right)$ by $B$ are semi-invertible or strongly homotopy
  invertible if $C^*_r(G)$ has this property. The same argument works for
  the full group $C^*$-algebra.
\end{proof}
\end{prop}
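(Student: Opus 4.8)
The plan is to reduce the proposition to an iterated application of the two tensor‑product permanence results already in hand, namely \refcor{finte} and \reflemma{anne!!}. The first step is to record the tensor‑product decomposition of the relevant group $C^*$‑algebra. Since $\mathbb Z^k$ is abelian, Pontryagin duality gives $C^*_r(\mathbb Z^k) = C^*(\mathbb Z^k) \cong C(\mathbb T^k)$, and since $H$ is finite, $C^*_r(H) = C^*(H)$ is finite dimensional; using that the reduced group $C^*$‑algebra of a direct product is the (spatial) tensor product of the reduced group $C^*$‑algebras of the factors, I would write
$$
C^*_r\left(\mathbb Z^k \times H \times G\right) \cong C\left(\mathbb T^k\right) \otimes C^*(H) \otimes C^*_r(G).
$$
Finally, as commutative $C^*$‑algebras $C(\mathbb T^k) \cong C(\mathbb T) \otimes \cdots \otimes C(\mathbb T)$ ($k$ factors), so the right‑hand side is obtained from $C^*_r(G)$ by tensoring first by one finite‑dimensional algebra and then successively by $k$ copies of $C(\mathbb T)$.

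With this decomposition the bootstrapping is immediate. Starting from the hypothesis that all extensions of $C^*_r(G)$ by $B$ are semi‑invertible (respectively strongly homotopy invertible), I would first apply \refcor{finte} with $A = C^*_r(G)$ and finite‑dimensional factor $F = C^*(H)$ to conclude that all extensions of $C^*(H) \otimes C^*_r(G)$ by $B$ have the same property. Then I would invoke \reflemma{anne!!} exactly $k$ times, tensoring on one further copy of $C(\mathbb T)$ at each stage, so that after the $j$‑th step all extensions of $C(\mathbb T)^{\otimes j} \otimes C^*(H) \otimes C^*_r(G)$ by $B$ are semi‑invertible (resp. strongly homotopy invertible). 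After $k$ steps this is precisely the desired conclusion for $C^*_r(\mathbb Z^k \times H \times G)$.

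For the full group $C^*$‑algebra the argument runs verbatim. Because $\mathbb Z^k$ and $H$ are amenable their full and reduced group $C^*$‑algebras coincide and are nuclear, so the decomposition
$$
C^*\left(\mathbb Z^k \times H \times G\right) \cong C\left(\mathbb T^k\right) \otimes C^*(H) \otimes C^*(G)
$$
holds and the same two permanence results apply with $C^*(G)$ in place of $C^*_r(G)$. This proposition is genuinely a corollary of the earlier lemmas, so I do not expect a real obstacle; the nearest thing to one is the bookkeeping of tensor norms, where one must observe that tensoring by the nuclear algebras $C(\mathbb T^k)$ and $C^*(H)$ is insensitive to the choice of $C^*$‑tensor product, so that the factorizations above are legitimate and \refcor{finte} and \reflemma{anne!!} may be applied exactly as stated.
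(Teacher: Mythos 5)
Your proof is correct and follows the same route as the paper: decompose $C^*_r(\mathbb Z^k \times H \times G) \simeq C(\mathbb T^k)\otimes C^*(H)\otimes C^*_r(G)$, apply Corollary \ref{finte} for the finite-dimensional factor $C^*(H)$, and apply Lemma \ref{anne!!} once for each of the $k$ circle factors, with the identical argument for the full group $C^*$-algebra. Your explicit iteration of Lemma \ref{anne!!} and the remark on nuclearity making the tensor norms unambiguous merely spell out details the paper leaves implicit.
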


Finally, we observe that it is also possible to use Theorem \ref{amalthm}
and Theorem \ref{amalsemi3} to prove semi-invertibility for extensions
of the full group $C^*$-algebra of certain HNN-extensions by using the
realization obtained by Ueda in \cite{U} of such group $C^*$-algebras  as amalgamated free products.

 \end{document}